\algnewcommand{\algorithmicgoto}{\textbf{go to}}%
\algnewcommand{\Goto}[1]{\algorithmicgoto~\ref{#1}}%
\algnewcommand{\LineComment}[1]{\Statex \(\triangleright\) #1}
\algnewcommand{\LineCommentN}[1]{\Statex \hspace{1cm}\(\triangleright\) #1}
\newenvironment{proof}[1][Proof]{\begin{trivlist}
\item[\hskip \labelsep {\textit{ #1}.}]}{\qed \end{trivlist}}
\newenvironment{remark}[1][Remark.]{\begin{trivlist}
\item[\hskip \labelsep {\bfseries #1}]}{\end{trivlist}}
\newcommand{\TheTitle}{Simultaneous Mode, Input and State Estimation for Switched Linear Stochastic Systems}
\begin{document}

\begin{frontmatter}
\title{\TheTitle}

\author[First]{Sze Zheng Yong}
\author[Second]{Minghui Zhu}
\author[First]{Emilio Frazzoli}

\address[First]{Laboratory for Information and Decision Systems, Massachusetts Institute of Technology, Cambridge, MA 02139, USA (e-mail: szyong@mit.edu, frazzoli@mit.edu).}
\address[Second]{Department of Electrical Engineering, Pennsylvania State University, 201 Old Main, University Park, PA 16802, USA (e-mail: muz16@psu.edu).}

\begin{abstract}
  In this paper, we propose a filtering algorithm for simultaneously estimating the mode, input and state of hidden mode switched linear stochastic systems with unknown inputs. Using a multiple-model approach with a bank of linear input and state filters for each mode, our algorithm relies on the ability to find the most probable model as a mode estimate, which we show is possible with input and state filters by identifying a key property, that a particular residual signal we call \emph{generalized innovation} is a Gaussian white noise. We also provide an asymptotic analysis for the proposed algorithm and provide sufficient conditions for \emph{asymptotically} achieving convergence to the true model (\emph{consistency}), or to the `closest' model according to an information-theoretic measure (\emph{convergence}). 
A simulation example of intention-aware vehicles at an intersection is given to demonstrate the effectiveness of our approach.\end{abstract}

\end{frontmatter}

%

\section{Introduction}
Most autonomous systems must operate without knowledge of the intention and the decisions of other systems or humans. Thus, in many instances, these intentions and control decisions need to be inferred from noisy measurements. This problem can be conveniently considered within the framework of \emph{hidden mode hybrid systems} (HMHS, see, e.g., \cite{verma.delvecchio.12,yong.ACC.2013} and references therein) with unknown inputs, in which the system \emph{state} dynamics is described by a finite collection of functions. Each of these functions corresponds to an intention or \emph{mode} of the hybrid system, where the mode is unknown or \emph{hidden} and mode transitions are autonomous. 
In addition, by allowing unknown inputs in this framework, both deterministic and stochastic disturbance inputs and noise can also be considered. There are a large number of applications, such as urban transportation systems \cite{Yong.Zhu.ea.CDC14_switched}, aircraft tracking and fault detection \cite{Liu.Hwang.2011}, as well attack-resilient estimation of power systems \cite{Yong.Zhu.ea.CDC15}, in which it is not realistic to assume knowledge of the mode and disturbance inputs or they are simply impractical or 
too costly to measure. 

\emph{Literature review.}
The filtering problem of hidden mode hybrid systems without unknown inputs have been extensively studied (see, e.g.,  \cite{Bar-Shalom.2004,Mazor.1998} and references therein), especially in the context of target tracking applications, along with their convergence and consistency properties  \cite{Baram.Jun1978,Baram.Feb1978}. These filtering algorithms, which use a multiple-model approach, consist of a bank of Kalman filters \cite{KalmanF.1960} for each mode and a 
likelihood-based approach that uses the whiteness property of the innovation \cite{Hanlon.2000,Kailath.1968} to determine the probability of each mode. In the case when the mode transition is assumed to be Markovian, hypothesis merging algorithms are developed such as the generalized pseudo-Bayesian (GPBn) as well as the interacting multiple-model (IMM) algorithms \cite{Bar-Shalom.2004,Blom.1988}. 

However, oftentimes the disturbance inputs that include exogenous input, fault or attack signals cannot be modeled as a zero-mean, Gaussian white noise or as a restricted finite set of input profiles, which gives rise to a need for an extension of the existing algorithms to hidden mode hybrid systems with unknown inputs. Such an algorithm was first proposed in \cite{Liu.Hwang.2011} for a limited class of systems, i.e., when unknown inputs only affect the dynamics. Thus, more general algorithms for systems where unknown inputs that can also affect output measurements, as is the case for data injection attacks on sensors \cite{Yong.Zhu.ea.CDC15}, are still lacking.  
Moreover, the approach taken in \cite{Liu.Hwang.2011} is based on running a bank of state-only filters with a possibly suboptimal decoupling of the unknown inputs, as opposed to simultaneous input and state filters that have lately gained more attention. 
Of all the proposed algorithms, the input and state filters in our previous work  \cite{Yong.thesis2015,Yong.Zhu.ea.CDC15_General,Yong.Zhu.ea.Automatica15} are in the most general form and have proven stability and optimality properties, and are hence the most suitable for the problem at hand. 

 \emph{Contributions.}
In this paper, we present a novel multiple-model approach for simultaneous estimation of mode, input and state of switched linear stochastic systems with unknown inputs. As with multiple-model estimation of systems without unknown inputs, a bank of optimal input and state filters \cite{Yong.thesis2015,Yong.Zhu.ea.CDC15_General,Yong.Zhu.ea.Automatica15}, one for each mode, is run in parallel.  Next, we devise a likelihood-based mode association algorithm to determine the probability of each mode. This involves the definition of a \emph{generalized innovation} signal, which we prove is a Gaussian white noise. Then, we use this whiteness property to form a likelihood function, which is used to find the most probable mode. To manage the growing number of hypotheses, we employ a similar approach to the interacting multiple-model estimator \cite{Blom.1988}, which mixes the initial conditions based on mode transition probabilities. 
We then study the asymptotic behavior of our approach (also for a very special case when the hidden mode is deterministic) and provide sufficient conditions for \emph{asymptotically} achieving convergence to the true model (\emph{consistency}), or to the `closest' model according to an information-theoretic measure, i.e., with the minimum Kullback-Leibler (KL) divergence \cite{Kullback.1951}  (\emph{convergence}).  A preliminary version of this 
paper was presented at the 2014 and 2015 IEEE Conference on Decision and Control \cite{Yong.Zhu.ea.CDC14_switched,Yong.Zhu.ea.CDC15} where the asymptotic behavior of only the special case of a deterministic hidden mode was investigated.


\section{Motivating Example} \label{sec:motivation}

\begin{figure}[!b]
\begin{center}
\includegraphics[scale=0.55]{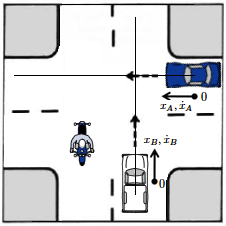}
\caption{Two vehicles crossing an intersection.\label{fig:intersection} }
\end{center}
\end{figure}

To motivate the problem considered in this paper, we consider the scenario of vehicles crossing a 4-way intersection where each vehicle does not have any information about the intention of the other vehicles.
To simplify the problem, we consider the case with two vehicles (see Figure \ref{fig:intersection}): 
Vehicle A is human driven (uncontrolled) and Vehicle B is autonomous (controlled), with dynamics described by $\ddot{x}_A=-0.1\dot{x}_A + d_1$ and $\ddot{x}_B=-0.1\dot{x}_B + u$, where $x$ and $\dot{x}$ are vehicle positions and velocities.  We assume\footnote{The assumed permutation of intentions is for illustrative purposes only and was not a result of any limitations on the proposed algorithms.} that Vehicle A approaches the intersection with a default intention, i.e., without considering the presence of Vehicle B. Then, at the intersection, the driver of Vehicle A can choose between three intentions:
\begin{itemize}
\item to continue while ignoring the other vehicle with an unknown input $d_1$ (\underline{I}nattentive Driver, default mode),
\item to attempt to cause a collision (\underline{M}alicious Driver), or
\item to stop (\underline{C}autious Driver).
\end{itemize}
Then, once either vehicle completes the crossing of the intersection, Vehicle A returns to the default intention. 

Thus, in the presence of noise, this intersection-crossing scenario is an instance of a hidden mode switched linear stochastic system with an unknown input. The intention of driver A is a hidden mode and the actual input of vehicle A is an unknown input (which is not restricted to a finite set). The objective is to simultaneously estimate the intention (mode), input and state of the vehicles for safe navigation through the intersection.

\section{Problem Statement} \label{sec:Problem}
\begin{figure}[!b]
\begin{center}
\includegraphics[scale=0.35]{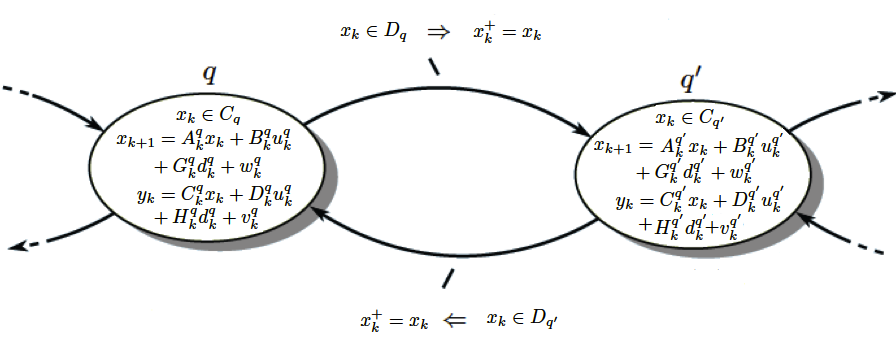}
\caption{Illustration of a switched linear system with unknown inputs as a hybrid automaton with two modes, $q$ and $q'$.\label{fig:hyb_diag} }
\end{center}
\end{figure}

We consider a hidden mode switched linear stochastic system with unknown inputs (see Figure \ref{fig:hyb_diag}):
\begin{align} \label{eq:hybridDyn}
\begin{array}{rl}
(x_{k+1},{q}_k)&=( A_k^{q_k} x_k+B_k^{q_k} u^{q_k}_k+G_{k}^{q_k} d^{q_k}_{k} +w^{q_k}_k,q_k), 
x_k\in \mathcal{C}_{q_k}\\
(x_k,q_k)^+ &=(x_k,\delta^{q_k}(x_k)),  \quad \qquad \quad \qquad \quad \qquad x_k \in \mathcal{D}_{q_k}\\
y_k&=C^{q_k}_{k} x_k+ D^{q_k}_{k} u^{q_k}_k +H^{q_k}_{k} d^{q_k}_k + v^{q_k}_{k} 
\end{array}
\end{align}
where $x_k \in \mathbb{R}^n$ is the continuous system state and $q_k \in \mathcal{Q}\triangleq \{1,2,\hdots,\mathfrak{N}\}$ the hidden discrete state or \emph{mode}. The mode jump process is assumed to be left-continuous and hidden mode systems refer to systems in which $q_k$ is not directly measured and the mode transitions are autonomous. For each mode $q_k$, $u^{q_k}_k \in U_{q_k} \subset \mathbb{R}^m$ is the known input, $d^{q_k}_k \in \mathbb{R}^p$ the unknown input, $y_k \in \mathbb{R}^l$ the output, $\delta^{q_k}(\cdot)$ the mode transition function, $\mathcal{C}_{q_k}$ and $\mathcal{D}_{q_k}$ are flow and jump sets, 
while the process noise $w_k^{q_k} \in \mathbb{R}^n$ and the measurement noise $v^{q_k}_k \in \mathbb{R}^l$ are assumed to be mutually uncorrelated, zero-mean, Gaussian white random signals with known covariance matrices, $Q^{q_k}_k=\mathbb{E} [w_k^{q_k} w_k^{q_k \top}] \succeq 0$ and $R^{q_k}_k=\mathbb{E} [v^{q_k}_k v_k^{q_k \top}] \succ 0$, respectively. The matrices $A^{q_k}_k$, $B^{q_k}_k$, $G^{q_k}_k$, $C^{q_k}_k$, $D^{q_k}_k$ and $H^{q_k}_k$ are known, and $x_0$ is independent of $v^{q_k}_k$ and $w^{q_k}_k$ for all $k$. 
In addition to the common assumptions above, we assume the following:
\begin{description}
\item[A1)]
No prior `useful' knowledge of the dynamics of $d^{q_k}_k$ is known (uncorrelated with $\{d^{q_j}_j\}$, $\forall j\neq k$, and $\{w^{q_j}_j\}, \{v^{q_j}_j\}$, $\forall j$) and $d^{q_k}_k$ can be a signal of any type.
\item[A2)] In each mode, the system is
\emph{strongly detectable}\footnote{That is, the initial condition $x_0$ and the unknown input sequence $\{d^{q_j}_j\} ^{k-1}_{j=0}$ can be asymptotically determined from the output sequence $\{y_i \}^k_{j=0}$ as $k\to \infty$ (see \cite[Section 3.2]{Yong.Zhu.ea.Automatica15} for necessary and sufficient conditions for this property).}. 
\end{description}

The objective of this paper is to design a recursive filter algorithm which simultaneously estimates the system state $x_k$, the unknown input $d^{q_k}_k$ and the hidden mode $q_k$ based on the measurements up to time $k$, $\{y_0,y_1,\hdots, y_k \}$, as well as to analyze the asymptotic behavior of the proposed algorithm.

\section{Preliminary Material} \label{sec:prelim}
In this section, we present a brief summary of the minimum-variance unbiased filter for linear systems with unknown inputs. For detailed proof and derivation of the filter, the reader is referred to \cite{Yong.thesis2015,Yong.Zhu.ea.CDC15_General,Yong.Zhu.ea.Automatica15}. Moreover, we define a \emph{generalized innovation} and show that it is a Gaussian white noise. These form an essential part of the multiple-model estimation algorithm that  we will describe in Section \ref{sec:MainResult}. The algorithm runs a bank of $\mathfrak{N}$ filters (one for each mode) in parallel and the filters are in essence the same except for the different sets of matrices and signals $\{A_k^{q_k},B_k^{q_k},C_k^{q_k},D_k^{q_k},G_k^{q_k},H_k^{q_k},Q_k^{q_k},R_k^{q_k},u_k^{q_k},d_k^{q_k}\}$. Hence, to simplify notation, the conditioning on the mode $q_k$ is omitted in the entire Section \ref{sec:prelim}.

\subsection{Optimal Input and State Filter} \label{sec:ULISE}

As is shown in \cite[Section 3.1]{Yong.Zhu.ea.Automatica15}, the system for each mode after a similarity transformation is given by:
\begin{align}
x_{k+1} & = A_k x_k+B_k u_k+G_{1,k} d_{1,k} +G_{2,k} d_{2,k} +w_k  \label{eq:sysX}\\
z_{1,k}&= C_{1,k} x_k + D_{1,k} u_k +\Sigma_k d_{1,k} + v_{1,k} \label{eq:z1}\\
z_{2,k} &= C_{2,k} x_k + D_{2,k} u_k + v_{2,k}. \label{eq:z2}
\end{align}
The transformation essentially decomposes the unknown input $d_k$ and the measurement $y_k$, each into two components, i.e., $d_{1,k} \in \mathbb{R}^{p_{H_k}}$ and $d_{2,k} \in \mathbb{R}^{p-p_{H_k}}$; as well as $z_{1,k}\in \mathbb{R}^{p_{H_k}}$ and $z_{2,k} \in \mathbb{R}^{l-p_{H_k}}$, where $p_{H_k}=\textrm{rank}(H_k)$.
For conciseness, we assume that the system states can be estimated without delay\footnote{That is, when $C_{2,k} G_{2,k-1}$ has full column rank. By allowing potential delays in state estimation, this assumption can be relaxed such that input and state estimation is possible as long as the system is strongly detectable \cite{Yong.Zhu.ea.CDC15_General}. For brevity, we refer the readers to the filter algorithms and analysis in \cite{Yong.Zhu.ea.CDC15_General}.}. Then, given measurements up to time $k$, the optimal three-step recursive filter in the minimum-variance unbiased sense can be summarized as follows:

\noindent\emph{Unknown Input Estimation}:
\begin{align}
\begin{array}{rl}
\hat{d}_{1,k} &=M_{1,k} (z_{1,k}-C_{1,k} \hat{x}_{k|k}-D_{1,k} u_k)\\
\hat{d}_{2,k-1}&=M_{2,k} (z_{2,k}-C_{2,k} \hat{x}_{k|k-1}-D_{2,k} u_k)\\
\hat{d}_{k-1} &= V_{1,k-1} \hat{d}_{1,k-1} + V_{2,k-1} \hat{d}_{2,k-1} \end{array}
\end{align}
\emph{Time Update}:
\begin{align}
\begin{array}{rl}
 \hat{x}_{k|k-1}&=A_{k-1} \hat{x}_{k-1 | k-1} + B_{k-1} u_{k-1} + G_{1,k-1} \hat{d}_{1,k-1} \\
\hat{x}^\star_{k|k}&=\hat{x}_{k|k-1}+G_{2,k-1} \hat{d}_{2,k-1} 
\end{array}
\end{align}
\emph{Measurement Update}:
\begin{align}
\hat{x}_{k|k}
&= \hat{x}^\star_{k|k} +\tilde{\overline{L}}_k \tilde{\Gamma}_k (z_{2,k}-C_{2,k} \hat{x}^\star_{k|k}-D_{2,k} u_k)  \quad \label{eq:stateEst}
\end{align}
where $\hat{x}_{k-1|k-1}$, $\hat{d}_{1,k-1}$, $\hat{d}_{2,k-1}$ and $\hat{d}_{k-1}$ denote the optimal estimates of $x_{k-1}$, $d_{1,k-1}$, ${d}_{2,k-1}$ and $d_{k-1}$; $\tilde{\Gamma}_k \in \mathbb{R}^{p_{\tilde{R}} \times l-p_{H_k}}$ is a design matrix that is chosen to project the residual signal $\overline{\nu}_k\triangleq z_{2,k}-C_{2,k} \hat{x}^\star_{k|k}-D_{2,k} u_k$ onto a vector of $p_{\tilde{R}}$ independent random variables, while
$\tilde{\overline{L}}_k \in \mathbb{R}^{n \times p_{\tilde{R}}}$, $M_{1,k} \in \mathbb{R}^{p_{H_k} \times p_{H_k}}$ and $M_{2,k} \in \mathbb{R}^{(p-p_{H_k}) \times (l-p_{H_k})}$, as well as $\tilde{L}_k\triangleq \tilde{\overline{L}}_k \tilde{\Gamma}_k$, are filter gain matrices that minimize the state and input error covariances. For the sake of completeness, the optimal input and state filter in \cite{Yong.thesis2015,Yong.Zhu.ea.Automatica15} is reproduced in Algorithm \ref{algorithm1}.

\begin{algorithm}[!b] 
\caption{Opt-Filter ($\hat{x}_{k-1|k-1}^{0,q_k}$,$\hat{d}_{1,k-1}^{0,q_k}$,$P^{x,0,q_k}_{k-1|k-1}$, $P^{d,0,q_k}_{1,k-1}$)\hfill [superscript $q_k$ omitted in the following]}\label{algorithm1}
\begin{algorithmic}[1]
\LineComment{Estimation of $d_{2,k-1}$ and $d_{k-1}$}
\State $\hat{A}_{k-1}=A_{k-1}-G_{1,k-1}M_{1,k-1} C_{1,k-1}$;
\State $\hat{Q}_{k-1}=G_{1,k-1}M_{1,k-1}R_{1,k-1}M_{1,k-1}^\top G_{1,k-1}^\top +Q_{k-1}$;
\State $\tilde{P}_k=\hat{A}_{k-1} P^{x,0}_{k-1|k-1} \hat{A}_{k-1}^\top +\hat{Q}_{k-1}$;
\State $\tilde{R}_{2,k}=C_{2,k} \tilde{P}_k C_{2,k}^\top+R_{2,k}$;
\State $P^d_{2,k-1}=(G_{2,k-1}^\top C_{2,k}^\top \tilde{R}_{2,k}^{-1} C_{2,k} G_{2,k-1})^{-1}$;
\State $M_{2,k}=P^d_{2,k-1} G_{2,k-1}^\top C_{2,k}^\top \tilde{R}^{-1}_{2,k}$;
\State $\hat{x}_{k|k-1}=A_{k-1} \hat{x}^0_{k-1|k-1}+B_{k-1} u_{k-1}+G_{1,k-1} \hat{d}^0_{1,k-1}$;
\State $\hat{d}_{2,k-1}=M_{2,k} (z_{2,k}-C_{2,k} \hat{x}_{k|k-1}-D_{2,k} u_k)$;
\State $\hat{d}_{k-1} =V_{1,k-1} \hat{d}^0_{1,k-1} + V_{2,k-1} \hat{d}_{2,k-1}$;
\State $P^d_{12,k-1}=M_{1,k-1} C_{1,k-1} P^{x,0}_{k-1|k-1} A_{k-1}^\top C_{2,k}^\top M_{2,k}^\top-P^{d,0}_{1,k-1} G_{1,k-1}^\top C_{2,k}^\top M_{2,k}^\top$;
\State $P^d_{k-1}=V_{k-1} \begin{bmatrix} P^{d,0}_{1,k-1} & P^d_{12,k-1} \\ P^{d \top}_{12,k-1} & P^d_{2,k-1} \end{bmatrix} V_{k-1}^\top$;
\LineComment{Time update}
\State $\hat{x}^\star_{k|k}=\hat{x}_{k|k-1}+G_{2,k-1} \hat{d}_{2,k-1}$;
\State $P^{\star x}_{k|k}=G_{2,k-1} M_{2,k} R_{2,k} M_{2,k}^\top G_{2,k}^\top+(I-G_{2,k-1}M_{2,k}C_{2,k})\tilde{P}_k(I-G_{2,k-1}M_{2,k}C_{2,k})^\top$;
\State $\tilde{R}^\star_{2,k}=C_{2,k} P^{\star x}_{k|k} C_{2,k}^\top +R_{2,k} -C_{2,k} G_{2,k-1} M_{2,k} R_{2,k}-R_{2,k} M_{2,k}^\top G_{2,k-1}^\top C_{2,k}^\top$;
\LineComment{Measurement update}
\State $\tilde{L}_k=(P^{\star x}_{k|k} C_{2,k}^\top - G_{2,k-1} M_{2,k} R_{2,k}) \tilde{R}^{\star \dagger}_{2,k}$;
\State $\hat{x}_{k|k}=\hat{x}^\star_{k|k}+\tilde{L}_k(z_{2,k}-C_{2,k} \hat{x}^\star_{k|k}-D_{2,k} u_k)$;
\State $P^x_{k|k}= (I-\tilde{L}_k C_{2,k})G_{2,k-1}M_{2,k} R_{2,k} \tilde{L}_k^\top+ \tilde{L}_k R_{2,k}  M_{2,k}^\top G_{2,k-1}^\top (I-\tilde{L}_k C_{2,k})^\top$
\Statex \hspace{1.5cm} $+(I-\tilde{L}_k C_{2,k}) P^{\star x}_{k|k} (I-\tilde{L}_k C_{2,k})^\top+\tilde{L}_k R_{2,k} \tilde{L}_k^\top$;
\LineComment{Estimation of $d_{1,k}$}
\State $\tilde{R}_{1,k}=C_{1,k}P^x_{k|k}C_{1,k}^\top+R_{1,k}$;
\State $M_{1,k}=\Sigma_k^{-1}$;
\State $P^d_{1,k}=M_{1,k} \tilde{R}_{1,k} M_{1,k}^\top$;
\State $\hat{d}_{1,k}=M_{1,k} (z_{1,k}-C_{1,k} \hat{x}_{k|k}-D_{1,k} u_k)$;
\end{algorithmic}
\end{algorithm}

\subsection{Properties of the Generalized Innovation Sequence}

In Kalman filtering, the innovation reflects the difference between the measured output at time $k$ and the optimal output forecast based on information available prior to time $k$. The \textit{a posteriori} (updated) state estimate is then a linear combination of the \textit{a priori} (predicted) estimate and the weighted innovation. In the same spirit, we generalize this notion of innovation to linear systems with unknown inputs by defining a \emph{generalized innovation} given by:
\begin{align}
\nu_k &\triangleq  \tilde{\Gamma}_k (z_{2,k}-C_{2,k} \hat{x}^\star_{k|k}-D_{2,k} u_k)\triangleq \tilde{\Gamma}_k \overline{\nu}_k \label{eq:g-inno} \\
\nonumber  & =\tilde{\Gamma}_k  (I-C_{2,k} G_{2,k-1} M_{2,k}) (z_{2,k}-C_{2,k} \hat{x}_{k|k-1}-D_{2,k} u_k)
\end{align}
which, similar to the conventional innovation, is weighted by $\tilde{\overline{L}}_k$ and combined with the predicted state estimate $\hat{x}^\star_{k|k}$ to obtain the updated state estimate $\hat{x}_{k|k}$ as seen in \eqref{eq:stateEst}. This definition differs from the conventional innovation in that the generalized innovation uses a subset of the measured outputs, i.e. $z_{2,k}$. In addition, the matrix $\tilde{\Gamma}_k$ is any matrix whose rows are independent of each other and are in the range space of $\mathbb{E}[\overline{\nu}_k \overline{\nu}_k^\top]$ that removes dependent components of $\overline{\nu}_k$ (a consequence of \cite[Lemma 7.6.3]{Yong.thesis2015} and \cite[Lemma 10]{Yong.Zhu.ea.Automatica15}), which further lowers the dimension of the generalized innovation.  An intuition for this is that the information contained in the `unused' subset is already exhausted for estimating the unknown inputs. Moreover, the optimal output forecast that is implied in \eqref{eq:g-inno} is a function of $\hat{x}^\star_{k|k}$ which contains information from the measurement at time $k$. Nonetheless, it is clear from \eqref{eq:g-inno} that when there are no unknown inputs, $z_{2,k}=y_k$, $C_{2,k}=C_k$, $D_{2,k}=D_k$, $G_{2,k-1}=G_{k-1}$ and $\tilde{\Gamma}_k$ can be chosen to be the identity matrix, in which case the definitions of generalized innovation and (conventional) innovation coincide.

In the following theorem, we establish that the generalized innovation, like the conventional innovation, is a Gaussian white noise (see proof in Section \ref{sec:analysis}).

\begin{thm} \label{thm:g-inno}
The generalized innovation, $\nu_k$ given in \eqref{eq:g-inno} is a Gaussian white noise with zero mean and a variance of $S_k=\tilde{\Gamma}_k \tilde{R}^\star_{2,k} \tilde{\Gamma}_k^\top$, with $\tilde{R}^\star_{2,k} \triangleq C_{2,k} P^{\star x}_{k|k} C_{2,k}^\top +R_{2,k} -C_{2,k} G_{2,k-1} M_{2,k} R_{2,k}$.
\end{thm}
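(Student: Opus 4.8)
The plan is to reduce everything to the error of the time-updated estimate, $\tilde{x}^\star_{k|k}\triangleq x_k-\hat{x}^\star_{k|k}$. Substituting the output equation \eqref{eq:z2} into the definition \eqref{eq:g-inno} gives $\overline{\nu}_k=C_{2,k}\tilde{x}^\star_{k|k}+v_{2,k}$, so that $\nu_k=\tilde{\Gamma}_k(C_{2,k}\tilde{x}^\star_{k|k}+v_{2,k})$. The first task is to derive a clean recursion for $\tilde{x}^\star_{k|k}$. Using the time and measurement updates together with the gain identities $M_{2,k}C_{2,k}G_{2,k-1}=I$ and $M_{1,k}=\Sigma_k^{-1}$ (which yield $\tilde{d}_{1,k-1}=-M_{1,k-1}(C_{1,k-1}\tilde{x}_{k-1|k-1}+v_{1,k-1})$ and the cancellation of the unknown-input terms $d_{1,k-1},d_{2,k-1}$), I expect to obtain $\tilde{x}^\star_{k|k}=(I-G_{2,k-1}M_{2,k}C_{2,k})[\hat{A}_{k-1}\tilde{x}_{k-1|k-1}-G_{1,k-1}M_{1,k-1}v_{1,k-1}+w_{k-1}]-G_{2,k-1}M_{2,k}v_{2,k}$, together with $\tilde{x}_{k-1|k-1}=\tilde{x}^\star_{k-1|k-1}-\tilde{L}_{k-1}\overline{\nu}_{k-1}$. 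This exhibits $\tilde{x}^\star_{k|k}$, and hence $\nu_k$, as a fixed linear map of the jointly Gaussian collection $\{x_0,w_{0:k-1},v_{1,0:k-1},v_{2,0:k}\}$, which immediately gives the Gaussianity claim (assuming a Gaussian $x_0$ and initial error).

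Zero mean and the variance are then handled by taking expectations on this recursion. Because the unknown-input terms cancel, the recursion is driven only by zero-mean noises plus a term proportional to $\mathbb{E}[\tilde{x}_{k-1|k-1}]$; a short induction from an unbiased initialization yields $\mathbb{E}[\tilde{x}^\star_{k|k}]=0$ and therefore $\mathbb{E}[\nu_k]=0$. For the variance I would expand $\mathbb{E}[\overline{\nu}_k\overline{\nu}_k^\top]=C_{2,k}P^{\star x}_{k|k}C_{2,k}^\top+C_{2,k}\mathbb{E}[\tilde{x}^\star_{k|k}v_{2,k}^\top]+\mathbb{E}[v_{2,k}\tilde{x}^{\star\top}_{k|k}]C_{2,k}^\top+R_{2,k}$, where $P^{\star x}_{k|k}=\mathbb{E}[\tilde{x}^\star_{k|k}\tilde{x}^{\star\top}_{k|k}]$ is exactly the covariance propagated in Algorithm \ref{algorithm1}. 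The cross term is read off the recursion: only its last summand involves $v_{2,k}$, so $\mathbb{E}[\tilde{x}^\star_{k|k}v_{2,k}^\top]=-G_{2,k-1}M_{2,k}R_{2,k}$, and assembling the pieces reproduces $\tilde{R}^\star_{2,k}$ as defined in Algorithm \ref{algorithm1}; conjugating by $\tilde{\Gamma}_k$ gives $S_k=\tilde{\Gamma}_k\tilde{R}^\star_{2,k}\tilde{\Gamma}_k^\top$. (Note that the symmetric form of $\tilde{R}^\star_{2,k}$ in Algorithm \ref{algorithm1} carries both cross terms, and I would use that form.)

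The whiteness claim, $\mathbb{E}[\nu_k\nu_j^\top]=0$ for $j<k$, is the substantive part, and it suffices to prove $\mathbb{E}[\overline{\nu}_k\overline{\nu}_j^\top]=0$. Since $\overline{\nu}_j$ depends only on noises up to time $j$ and $\overline{\nu}_k=C_{2,k}\tilde{x}^\star_{k|k}+v_{2,k}$ with $v_{2,k}$ independent of the past, the task collapses to showing $\mathbb{E}[\tilde{x}^\star_{k|k}\overline{\nu}_j^\top]=0$ for all $j\le k-1$, which I would prove by induction on $k$. The cornerstone is the orthogonality of the updated error to the current generalized innovation, $\mathbb{E}[\tilde{x}_{k-1|k-1}\overline{\nu}_{k-1}^\top]=0$: substituting $\tilde{x}_{k-1|k-1}=\tilde{x}^\star_{k-1|k-1}-\tilde{L}_{k-1}\overline{\nu}_{k-1}$ and using the gain formula $\tilde{L}_{k-1}=(P^{\star x}_{k-1|k-1}C_{2,k-1}^\top-G_{2,k-2}M_{2,k-1}R_{2,k-1})\tilde{R}^{\star\dagger}_{2,k-1}$ forces this cross-covariance to vanish, i.e., the optimal gain is precisely the regression coefficient of $\tilde{x}^\star_{k-1|k-1}$ on $\overline{\nu}_{k-1}$. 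Feeding this into the error recursion, the case $j\le k-2$ follows because every explicit noise term sits at a time later than $j$ and the surviving $\hat{A}_{k-1}$-term reduces, through $\tilde{x}_{k-1|k-1}$, to $\mathbb{E}[\tilde{x}^\star_{k-1|k-1}\overline{\nu}_j^\top]=0$ from the induction hypothesis.

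The delicate case is $j=k-1$, which I expect to be the main obstacle. Here $w_{k-1}$ and $v_{2,k}$ drop out by independence from $\overline{\nu}_{k-1}$ and the $\hat{A}_{k-1}\tilde{x}_{k-1|k-1}$-term vanishes by the orthogonality cornerstone above, leaving the single contribution $-(I-G_{2,k-1}M_{2,k}C_{2,k})G_{1,k-1}M_{1,k-1}\,\mathbb{E}[v_{1,k-1}\overline{\nu}_{k-1}^\top]$. Since $\tilde{x}^\star_{k-1|k-1}$ does not involve $v_{1,k-1}$, this reduces to $\mathbb{E}[v_{1,k-1}v_{2,k-1}^\top]$, so the whole expression is zero exactly when the two transformed measurement-noise components are uncorrelated — a property that the similarity transformation of \cite{Yong.Zhu.ea.Automatica15} must supply (consistent with the absence of any cross-covariance term in Algorithm \ref{algorithm1}). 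Verifying this decorrelation, and more generally confirming that the specific gain expressions in Algorithm \ref{algorithm1} produce all the required cancellations, is where the bulk of the careful bookkeeping lies; once it is in place, the induction closes and $\nu_k$ is established as a zero-mean Gaussian white noise with variance $S_k$.
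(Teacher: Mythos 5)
Your proposal is correct and follows essentially the same route as the paper's proof: the same reduction $\overline{\nu}_k=C_{2,k}\tilde{x}^\star_{k|k}+v_{2,k}$, the same noise-driven error recursion in which the unknown inputs cancel via $M_{2,k}C_{2,k}G_{2,k-1}=I$ and $M_{1,k}=\Sigma_k^{-1}$ (the paper unrolls it with a state-transition matrix $\Phi_{k|j}$ where you induct on $k$, but the decisive cancellation in both is the orthogonality $\mathbb{E}[\tilde{x}_{j|j}\nu_j^\top]=0$ enforced by the gain $\tilde{\overline{L}}_j$), and Gaussianity by linearity. The only substantive addition is that you explicitly flag the requirement $\mathbb{E}[v_{1,k-1}v_{2,k-1}^\top]=0$ from the similarity transformation, which the paper uses tacitly when it drops the $v_{1,j}$--$v_{2,j}$ cross term in $\mathbb{E}[v'_j v_{2,j}^\top]$.
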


\subsection{Likelihood Function}\label{sec:likelihood}

To facilitate the computation of model probabilities that is required in the multiple-model estimation algorithm we propose, 
we derive the \emph{likelihood function} for each mode at time $k$, $q_k$, as follows (proven in Section \ref{sec:analysis}).

\begin{thm} \label{thm:likelihood}
The likelihood  that model $q_k$ is consistent with measurement $z_{2,k}$ and generalized innovation $\nu_k$, given all measurements prior to time $k$, $Z^{k-1}$, is given by the likelihood function:
\begin{align}
\mathcal{L}(q_k|z_{2,k})&\triangleq 
P(z_{2,k}|q_k, Z^{k-1}) = P(\nu^{q_k}_k|Z^{k-1})  =\frac{\exp(-\overline{\nu}_k^{q_k \top} \tilde{R}^{\star,{q_k} \dagger}_{2,k}  \overline{\nu}^{q_k}_k /2)}{(2\pi)^{p_{\tilde{R}}/2} |\tilde{R}^{\star,{q_k}}_{2,k}|_+^{1/2}},
\label{eq:likelihood} 
\end{align}
where $\overline{\nu}_k^{q_k}=(I-C^{q_k}_{2,k} G^{q_k}_{2,k-1} M^{q_k}_{2,k})(z_{2,k}-C^{q_k}_{2,k} \hat{x}^{q_k}_{k|k-1}-D^{q_k}_{2,k} u_k)$, $p_{\tilde{R}}\triangleq \textrm{rank}(\tilde{R}^{\star,{q_k}}_{2,k})$ and $\tilde{R}^{\star,{q_k}}_{2,k}$ is given in Theorem \ref{thm:g-inno}; $\dagger$ and $|\cdot|_+$ represent  the Moore-Penrose pseudoinverse and pseudodeterminant, respectively.
\end{thm}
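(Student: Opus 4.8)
The plan is to prove the two equalities in \eqref{eq:likelihood} separately: first the explicit Gaussian formula $P(\nu^{q_k}_k|Z^{k-1})=(\text{RHS})$, which is essentially algebra layered on top of Theorem~\ref{thm:g-inno}, and then the identification $P(z_{2,k}|q_k,Z^{k-1})=P(\nu^{q_k}_k|Z^{k-1})$, which is where the dimension reduction must be handled with care. Throughout I drop the superscript $q_k$.

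For the explicit formula, I start from Theorem~\ref{thm:g-inno}: since $\nu_k=\tilde{\Gamma}_k\overline{\nu}_k$ is Gaussian white noise, it is independent of $Z^{k-1}$, so $P(\nu_k|Z^{k-1})$ equals its marginal, a zero-mean Gaussian with the \emph{nonsingular} covariance $S_k=\tilde{\Gamma}_k\tilde{R}^\star_{2,k}\tilde{\Gamma}_k^\top\in\mathbb{R}^{p_{\tilde R}\times p_{\tilde R}}$; hence $P(\nu_k|Z^{k-1})=\exp(-\tfrac12\nu_k^\top S_k^{-1}\nu_k)/((2\pi)^{p_{\tilde R}/2}|S_k|^{1/2})$. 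It then remains to recast this in the $\tilde{\Gamma}_k$-free form of \eqref{eq:likelihood}. I would take the compact eigendecomposition $\tilde{R}^\star_{2,k}=U\Lambda U^\top$ with $U^\top U=I_{p_{\tilde R}}$ and $\Lambda\succ0$; since the rows of $\tilde{\Gamma}_k$ span $\mathrm{range}(\tilde{R}^\star_{2,k})=\mathrm{range}(\mathbb{E}[\overline{\nu}_k\overline{\nu}_k^\top])$, I may write $\tilde{\Gamma}_k=C^\top U^\top$ for an invertible $C$, and normalize the rows to be orthonormal ($\tilde{\Gamma}_k\tilde{\Gamma}_k^\top=I$, i.e.\ $C$ orthogonal) — permissible because the state and input estimates in Algorithm~\ref{algorithm1} depend only on $\tilde{R}^{\star\dagger}_{2,k}$ and not on $\tilde{\Gamma}_k$. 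Substituting gives $S_k=C^\top\Lambda C$ and hence the two identities $\tilde{\Gamma}_k^\top S_k^{-1}\tilde{\Gamma}_k=U\Lambda^{-1}U^\top=\tilde{R}^{\star\dagger}_{2,k}$ and $|S_k|=|C|^2|\Lambda|=|\Lambda|=|\tilde{R}^\star_{2,k}|_+$. Because $\overline{\nu}_k\in\mathrm{range}(\tilde{R}^\star_{2,k})$ almost surely, the first identity yields $\nu_k^\top S_k^{-1}\nu_k=\overline{\nu}_k^\top\tilde{R}^{\star\dagger}_{2,k}\overline{\nu}_k$, and together these convert the Gaussian density into exactly the right-hand side of \eqref{eq:likelihood}.

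For the first equality, the point is that, conditioned on $Z^{k-1}$, the density of $z_{2,k}$ relevant for mode discrimination coincides with that of the strictly lower-dimensional $\nu_k$ (with $p_{\tilde R}\le l-p_{H_k}$). Given $Z^{k-1}$, the map $z_{2,k}\mapsto\nu_k=\tilde{\Gamma}_k(I-C_{2,k}G_{2,k-1}M_{2,k})(z_{2,k}-C_{2,k}\hat{x}_{k|k-1}-D_{2,k}u_k)$ is affine with a deterministic offset. I would split the raw residual $r_k\triangleq z_{2,k}-C_{2,k}\hat{x}_{k|k-1}-D_{2,k}u_k$ into the part $M_{2,k}r_k=\hat{d}_{2,k-1}$ consumed by the unknown-input estimate and the complementary part $\overline{\nu}_k=(I-C_{2,k}G_{2,k-1}M_{2,k})r_k$. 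By Assumption~A1 the unknown input is arbitrary, so the direction carrying $\hat{d}_{2,k-1}$ is uninformative about the mode and the mode-relevant randomness lives entirely in $\overline{\nu}_k$, equivalently in $\nu_k=\tilde{\Gamma}_k\overline{\nu}_k$. Restricting to $\mathrm{range}(\tilde{R}^\star_{2,k})$ and writing $\overline{\nu}_k=U\alpha$, the orthonormal choice $\tilde{\Gamma}_k=C^\top U^\top$ with $C$ orthogonal makes $\alpha\mapsto\nu_k=C^\top\alpha$ a measure-preserving bijection (unit Jacobian), so the conditional density of $z_{2,k}$ on this effective support equals $P(\nu_k|Z^{k-1})$, closing the chain.

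The main obstacle is precisely this last identification. Unlike the classical Kalman case, where the innovation is a unit-Jacobian affine bijection of the \emph{full} measurement and $P(y_k|Z^{k-1})=P(\nu_k|Z^{k-1})$ is immediate, here the transformation is rank-deficient, and one must argue via A1 that the discarded directions carry no mode information while ensuring the retained Gaussian is correctly normalized through the pseudodeterminant. Choosing $\tilde{\Gamma}_k$ orthonormal is what reconciles the $\tilde{\Gamma}_k$-dependent object $P(\nu_k|Z^{k-1})$ with the $\tilde{\Gamma}_k$-free formula \eqref{eq:likelihood}; confirming that the estimator is genuinely invariant to this normalization (so the choice is harmless) is the one bookkeeping point I would verify carefully. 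The remaining pseudoinverse and pseudodeterminant identities are routine once the compact eigendecomposition is in hand.
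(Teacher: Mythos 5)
Your proposal follows essentially the same route as the paper: invoke the whiteness and Gaussianity of $\nu_k$ from Theorem~\ref{thm:g-inno} to write its conditional density with covariance $S_k=\tilde{\Gamma}_k\tilde{R}^\star_{2,k}\tilde{\Gamma}_k^\top$, then choose $\tilde{\Gamma}_k$ with orthonormal rows spanning $\mathrm{range}(\tilde{R}^\star_{2,k})$ so that $\tilde{\Gamma}_k^\top S_k^{-1}\tilde{\Gamma}_k=\tilde{R}^{\star\dagger}_{2,k}$ and $|S_k|=|\tilde{R}^\star_{2,k}|_+$, interpreting the result as the Gaussian of $\overline{\nu}_k$ restricted to its $p_{\tilde{R}}$-dimensional support. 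Your explicit eigendecomposition merely verifies the pseudoinverse/pseudodeterminant identities the paper cites from Rao, and your A1-based justification of $P(z_{2,k}|q_k,Z^{k-1})=P(\nu_k|Z^{k-1})$ elaborates a step the paper treats as definitional, so the argument is correct and matches the paper's proof.
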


\section{Multiple-Model Estimation Algorithms} \label{sec:MainResult}

\begin{figure}[!h]
\begin{center}
\includegraphics[scale=0.25]{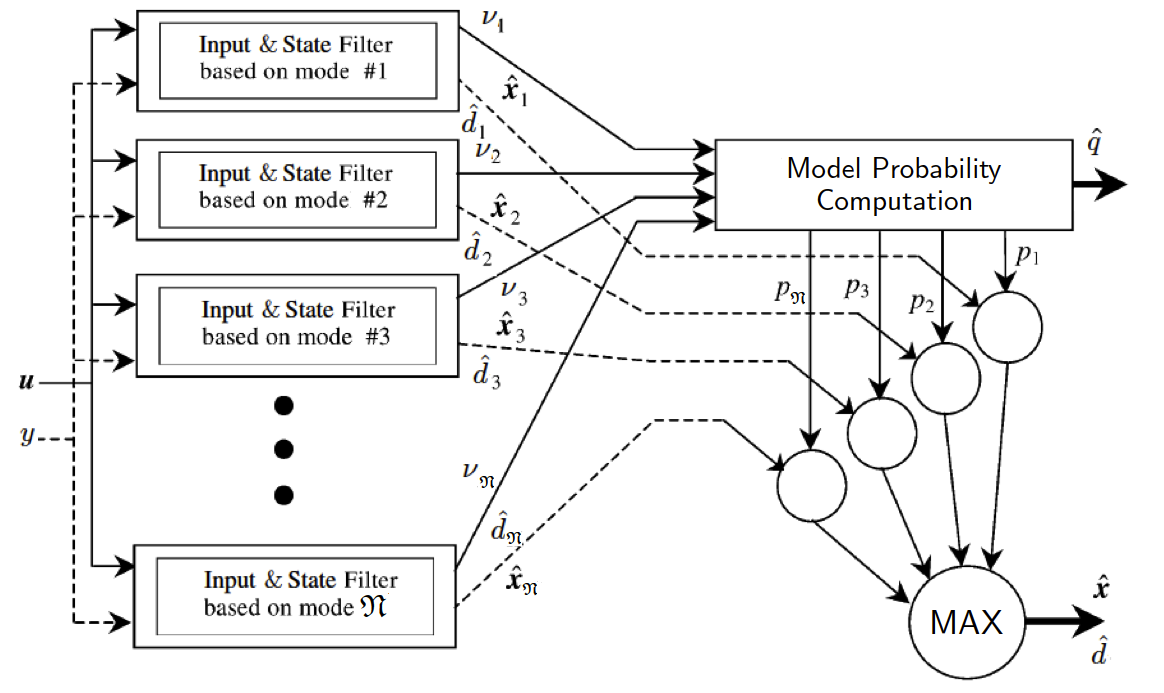}
\caption{Multiple-model framework for hidden mode, input and state estimation.\label{fig8:multipleModel} }
\end{center}
\end{figure}

The multiple-model (MM) approach we take is inspired by the multiple-model filtering algorithms for hidden mode hybrid systems with \emph{known} inputs (e.g.,  \cite{Bar-Shalom.2004,Mazor.1998} and references therein), that have been widely applied for target tracking. Our multiple-model framework consists of the parallel implementation of a \emph{bank of input and state filters} described in Section \ref{sec:ULISE}, with each model corresponding to a system mode (see Figure \ref{fig8:multipleModel}). The objective of the MM approach is then to decide which model/mode is the best representation of the current system mode as well as to  estimate the state and unknown input of the system based on this decision.

To do this, we first use Bayes' rule to recursively find the \emph{posterior} mode probability $\mu^j_k \triangleq  P(q_k=j|Z^k)$ at step $k$ for each mode $j$, given measurements $Z^k=\{z_{1,i},z_{2,i}\}^k_{i=0}$ and \emph{prior} mode probabilities $P(q_k=j'|Z^{k-1}), \, \forall j' \in \{1, \hdots,\mathfrak{N}\}$,  as  
\begin{align}
\begin{array}{rl}
\mu^j_k 
&= P(q_k=j|z_{1,k},z_{2,k},Z^{k-1}) = P(q_k=j|z_{2,k},Z^{k-1})\\
&=\displaystyle\frac{P(z_{2,k}|q_k=j,Z^{k-1}) P(q_k=j|Z^{k-1})}{\sum_{\ell=1}^\mathfrak{N}P(z_{2,k}|q_k=\ell,Z^{k-1}) P(q_k=\ell |Z^{k-1})}, 
\end{array}\label{eq:mu_j}
\end{align}
where we assumed that the probability of $q_k=j$ is independent of the measurement $z_{1,k}$. The rationale is that since we have no knowledge about $d_{1,k}$ and the $d_{1,k}$ signal can be of any type, the measurement $z_{1,k}$ provides no `useful' information about the likelihood of the system mode (cf. \eqref{eq:z1}). The likelihood function is similarly defined as $\mathcal{L}(q_k=j|z_{2,k})\triangleq P(z_{2,k}|q_k=j,Z^{k-1})$ given by \eqref{eq:likelihood}. 
Moreover, the Bayesian approach provides a means to encode what we know about the prior mode probabilities at time $k=0$: 
\begin{align} \label{eq:mu0}
P(q_0=j|Z^0)=\mu^j_0, \quad \forall \ 1,2,\hdots,\mathfrak{N},
\end{align}
where $Z^0$ is the prior information at time $k=0$ and $\sum_{j=1}^\mathfrak{N} \mu^j_0=1$. The maximum \emph{a posteriori} (MAP) mode estimate is then the most probable mode $q_k$ at each time $k$ that maximizes \eqref{eq:mu_j}.

\subsection{Dynamic Multiple-Model Estimation} 

\begin{figure}[!h]
\begin{center}
\includegraphics[scale=0.425]{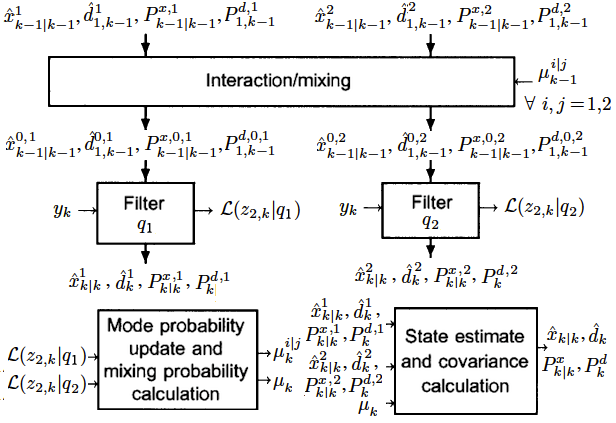}
\caption{Illustration of a dynamic multiple-model estimator with two hidden modes, using two input and state filters as described in Section \ref{sec:ULISE}.\label{fig8:dynamicMM} }
\end{center}
\end{figure}

\begin{algorithm}[!t] 
\caption{Dynamic MM-Estimator ( )}\label{algorithm3}
\begin{algorithmic}[1]
\State Initialize for all $j \in\{1,2,\hdots,\mathfrak{N} \}$: $\hat{x}^j_{0|0}$; $\mu_0^j$;
\Statex $\hat{d}^j_{1,0}=\Sigma_0^{j \, -1} (z^j_{1,0}-C^j_{1,0} \hat{x}^j_{0|0}-D^j_{1,0} u_0)$; 
\Statex $P^{d,j}_{1,0}=\Sigma_0^{j \, -1}(C^j_{1,0} P^{x,j}_{0|0} C^{j \top}_{1,0}+R^j_{1,0})\Sigma_0^{j \, -1}$; 
 \For {$k =1$ to $K$}
  \For {$j =1$ to $\mathfrak{N}$}
   \LineCommentN{Initial Condition Mixing}
   \State $p^j_k=\sum^\mathfrak{N}_{\ell=1} p_{\ell j} \mu_{k-1}^\ell$;
   \For {$i =1$ to $\mathfrak{N}$}
   \State $\mu_k^{i|j}=\frac{p_{ij} \mu_{k-1}^i}{p^j_k};$
   \EndFor
   \State Compute \eqref{eq:xmix}, \eqref{eq:dmix} and \eqref{eq:Pmix};
  \LineCommentN{Mode-Matched Filtering}
  \State Run Opt-Filter($\hat{x}_{k-1|k-1}^{0,j},\hat{d}_{1,k-1}^{0,j},P^{x,0,j}_{k-1|k-1},P^{d,0,j}_{1,k-1}$);
  \State $\overline{\nu}^j_k\triangleq z^j_{2,k}-C^j_{2,k} \hat{x}^{j \star}_{k|k}-D^j_{2,k} u_k;$
  \State $\mathcal{L}(j|z^j_{2,k})=\frac{1}{(2\pi)^{p^j_{\tilde{R}}/2} |\tilde{R}^{j,\star}_{2,k}|_+^{1/2}}\exp \left(-\frac{\overline{\nu}_k^{j \top} \tilde{R}^{j, \star \dagger}_{2,k}  \overline{\nu}^j_k }{2}\right); $
\EndFor
\For {$j =1$ to $\mathfrak{N}$}
 \LineCommentN{Mode Probability Update}
 \State $\mu^j_k=\frac{\mathcal{L}(j|z^j_{2,k}) p_{k}^j}{\sum^\mathfrak{N}_{\ell=1} \mathcal{L}(j|z^\ell_{2,k}) p_{k}^\ell};$
 \LineCommentN{Output}
\State Compute \eqref{eq:outputcombi};
\EndFor
\EndFor
\end{algorithmic}
\end{algorithm}

Our multiple-model estimation algorithm (cf. Figure \ref{fig8:dynamicMM} and Algorithm \ref{algorithm3}) assumes that the hidden mode is stochastic, i.e., the true mode switches in a Markovian manner with known, time-invariant and possibly state dependent transition probabilities
\begin{align*}
P(q_k=j|q_{k-1}=i,x_{k-1})=p_{ij}(x_{k-1}), \ \forall \ i,j \in {1,\hdots,\mathfrak{N}}.
\end{align*}
For brevity and without loss of generality, we assume that the mode transition probabilities are state independent, i.e., $p_{ij}(x_{k-1})=p_{ij}$. In other words, mode transition is a homogeneous Markov chain. 
The incorporation of the state dependency for stochastic guard conditions is rather straightforward, albeit lengthy and interested readers are referred to \cite{Seah.2009} for details and examples. We also assume that we have a fixed number of models. 
For better performance, modifications of the algorithm can be carried out to allow for a varying number of models (cf. \cite{Li.1996} for a discussion on model selection and implementation details).

In fact, the mode transition probabilities can serve as estimator design parameters (cf. \cite{Bar-Shalom.2004}), but care should be given when choosing the mode transition probabilities, as we shall see in Section \ref{sec:dynProp} that a wrong choice can also be detrimental to the consistency of the mode estimates. 
In addition, with the Markovian setting, the mode can change at each time step. As a result, the number of hypotheses (mode history) grows exponentially with time. Therefore, an optimal multiple-model filter is computationally intractable. We thus resort to suboptimal filters that manage the hypotheses in an efficient way. The simplest technique is \emph{hypothesis pruning} in which a finite number of most likely hypotheses are kept, whereas the \emph{hypothesis merging} approach keeps only the last few of the mode histories, and combines hypotheses that differ in earlier steps (cf. \cite{Bar-Shalom.2004} for approaches designed for switched linear systems without unknown inputs). In the following, we propose a hypothesis merging approach similar to the interacting multiple-model (IMM) algorithm \cite{Blom.1988}, 
which is considered the best compromise between complexity and performance \cite{Bar-Shalom.2004}.

Instead of maintaining the exponential number of hypotheses (i.e., $\mathfrak{N}^k$), our estimator maintains a linear number of estimates and filters (i.e., $\mathfrak{N}$) at each time $k$, by introducing three major components: 
\begin{description}
\item[Initial condition mixing:]
We compute the probability that the system was in mode $i$ at time $k-1$ conditioned on $Z^{k-1}$ and currently being in mode $j$:
\begin{align}
\begin{array}{rl}
\mu^{i|j}_k&\triangleq P(q_{k-1}=i|q_{k}=j,Z^{k-1})\\
&=\displaystyle \frac{P(q_k=j|q_{k-1}=i,Z^{k-1})P(q_{k-1}=i|Z^{k-1})}{\sum_{\ell=1}^\mathfrak{N} P(q_k=j|q_{k-1}=\ell,Z^{k-1})P(q_{k-1}=\ell|Z^{k-1})}\\
&=\displaystyle\frac{p_{ij} \mu^i_{k-1}}{P(q_k=j|Z^{k-1})}=\frac{p_{ij} \mu^i_{k-1}}{\sum_{\ell=1}^\mathfrak{N} p_{\ell j} \mu^\ell_{k-1}}.\end{array}\label{eq:mu_ij}
\end{align}
The initial conditions for the filter matched to $q_k=j$ for all $j=\{1,\hdots,N\}$ are then mixed according to:
\begin{align}
&\hspace{0.1cm}\hat{x}^{0,j}_{k-1|k-1}= \textstyle\sum^\mathfrak{N}_{i=1} \mu^{i|j}_k \hat{x}^i_{k-1|k-1} \label{eq:xmix}\\
&\hspace{0.5cm} \hat{d}^{0,j}_{1,k-1}= \textstyle\sum^\mathfrak{N}_{i=1} \mu^{i|j}_k \hat{d}^i_{1,k-1}  \label{eq:dmix}\\
&\hspace{-0.1cm}\begin{array}{l}  \label{eq:Pmix}
P^{x,0,j}_{k-1|k-1} =\sum^\mathfrak{N}_{i=1} \mu^{i|j}_k [(\hat{x}^i_{k-1|k-1}-\hat{x}^{0,j}_{k-1|k-1})(\hat{x}^i_{k-1|k-1}-\hat{x}^{0,j}_{k-1|k-1})^\top\\  \hspace{3.5cm}+P^{x,i}_{k-1|k-1}]\\
\hspace{0.4cm} P^{d,0,j}_{1,k-1}    =\sum^\mathfrak{N}_{i=1} \mu^{i|j}_k [(\hat{d}^i_{1,k-1}-\hat{d}^{0,j}_{1,k-1})(\hat{d}^i_{1,k-1}-\hat{d}^{0,j}_{1,k-1})^\top+P^{d,i}_{1,k-1}]
\end{array}
\end{align}
Note that there is no mixing of $\hat{d}_{2,k}$ and its covariances because they are computed for a previous step and are not initial conditions for the bank of filters.\\ 
\item[Mode-matched filtering:] A bank of $\mathfrak{N}$ simultaneous input and state filters (described in Section \ref{sec:ULISE}) is run in parallel using the mixed initial conditions computed in  \eqref{eq:xmix}, \eqref{eq:dmix} and \eqref{eq:Pmix}. In addition, the likelihood function $\mathcal{L}(q_k=j|z_{2,k})$ corresponding to each filter matched to mode $j$ is obtained using \eqref{eq:likelihood}. \\
\item[Posterior mode probability computation:] Given measurements up to time $k$, the posterior probability of mode $j$  can be found by substituting $P(q_k=j|Z^{k-1})= \sum_{i=1}^\mathfrak{N} p_{i j} \mu^i_{k-1}$ from the denominator of \eqref{eq:mu_ij} into \eqref{eq:mu_j}: 
\begin{align}
\begin{array}{rl}
\mu^{j}_k&=\displaystyle\frac{P(z_{2,k}|q_k=j,Z^{k-1}) \sum_{i=1}^\mathfrak{N} p_{i j} \mu^i_{k-1}}{\sum_{\ell=1}^\mathfrak{N}[P(z_{2,k}|q_k=\ell,Z^{k-1}) \sum_{i=1}^\mathfrak{N} p_{i \ell} \mu^i_{k-1}]}\\
&=\displaystyle\frac{\mathcal{L}(q_k=j|z_{2,k}) \sum_{i=1}^\mathfrak{N} p_{i j} \mu^i_{k-1}}{\sum_{\ell=1}^\mathfrak{N}[\mathcal{L}(q_k=\ell|z_{2,k})\sum_{i=1}^\mathfrak{N} p_{i \ell} \mu^i_{k-1}]}. 
\end{array}\label{eq:mu_dyn}
\end{align} 
Then, these mode probabilities are used to determine the most probable (MAP) mode at each time $k$ and the associated state and input estimates and covariances: 
\begin{align}
\hat{q}_k  &=  \arg \max_{j\in \{1,2,\hdots,\mathfrak{N}\}}  \mu^j_k, \label{eq:outputcombi}\\
\nonumber \hat{x}_{k|k}  &= \hat{x}^{\hat{q}_k}_{k|k},\, 
\  \hat{d}_{k}= \hat{d}^{\hat{q}_k}_{k},\, 
{P}^x_{k|k}= {P}^{x,\hat{q}_k}_{k|k}, \, 
{P}^d_{k}= {P}^{d,\hat{q}_k}_{k}. 
\end{align}
\end{description}

\subsubsection{Filter Properties} \label{sec:dynProp}
We now investigate the asymptotic behavior of our filter, i.e., its \emph{mode distinguishability} properties:

\begin{defn}[Mean Convergence] A filter is \emph{mean convergent} to a model $q \in \mathcal{Q}$, if the geometric mean of the mode probability for model $q$ asymptotically converges to 1 for all initial mode probabilities. 
\end{defn}

\begin{defn}[Mean Consistency] A filter is \emph{mean consistent}, if the geometric mean of the mode probability for the true model $\ast \in \mathcal{Q}$ asymptotically converges to 1 for all initial mode probabilities. 
\end{defn}

In the following, we show that under some reasonable conditions, our filter is \emph{mean convergent} to the model which is closest according to an information-theoretic measure (i.e., with the minimum Kullback-Leibler (KL) divergence \cite{Kullback.1951}), and when the true model is in the set of models, the filter  is \emph{mean consistent}. 
We will also discuss the optimality of resulting input and state estimates. The proofs of these results will be provided in Section \ref{sec:analysis}.
 
\textbf{\emph{Convergence/Consistency of Mode Estimates.}} We first derive the KL divergence of each model from the true model. Then, we analyze the mean behavior (averaged over all possible states) of the mode estimates. 

\begin{lem}\label{lem:KL}
The KL divergence of model $q \in \mathcal{Q}$ from the true model $q=\ast$ is 
\begin{align}
\begin{array}{rl}
D(f^\ast_\ell \| f^q_\ell)&\triangleq \mathbb{E}_{f^\ast_\ell} \left[\ln \frac{f^\ast_\ell}{f^q_\ell}\right]\\
&\;=\frac{1}{2}(p_{\tilde{R}^q_\ell}-p_{\tilde{R}^\ast_\ell}) \ln 2 \pi + \frac{1}{2} \ln |\tilde{R}_{2,\ell}^{q,\star}|_+- \frac{1}{2} \ln |\tilde{R}_{2,\ell}^{\ast,\star}|_+ \\
&\quad +\frac{1}{2} \mathbb{E}_{f^\ast_\ell}[{\rm tr}(\overline{\nu}^q_\ell \overline{\nu}^{q \top}_\ell (\tilde{R}_{2,\ell}^{q,\star})^ \dagger)] -\frac{1}{2} \mathbb{E}_{f^\ast_\ell}[{\rm tr}(\overline{\nu}^\ast_\ell \overline{\nu}^{\ast \top}_\ell (\tilde{R}_{2,\ell}^{\ast,\star})^ \dagger)]\\
&\;=\frac{1}{2}(p_{\tilde{R}^q_\ell}-p_{\tilde{R}^\ast_\ell}) \ln 2 \pi + \frac{1}{2} \ln |\tilde{R}_{2,\ell}^{q,\star}|_+- \frac{1}{2} \ln |\tilde{R}_{2,\ell}^{\ast,\star}|_+ \\
&\quad +\frac{1}{2} {\rm tr}(\tilde{R}_{2,\ell}^{q|\ast,\star} (\tilde{R}_{2,\ell}^{q,\star})^ \dagger) -\frac{1}{2} {\rm tr}(\tilde{R}_{2,\ell}^{\ast,\star} (\tilde{R}_{2,\ell}^{\ast,\star})^ \dagger), 
\end{array}
\label{eq:div}
\end{align}
where $f^j_\ell$ is a shorthand for $P (z_{2,\ell}|q_\ell=j,Z^{\ell-1})=\mathcal{L}(q_\ell=j|z_{2,\ell})$, $\tilde{R}_{2,\ell}^{\star,q|\ast}\triangleq \mathbb{E}_{f^\ast_\ell}[\overline{\nu}^q_\ell \overline{\nu}^{q \top}_\ell]$ and $p^q_{\tilde{R}_\ell}\triangleq \textrm{rank}(\tilde{R}^{\star,q}_{2,\ell})$. Note that the unknown inputs of each model need not have the same dimension; thus $p^q_{\tilde{R}_\ell}$ can be different for all $q \in \{\mathcal{Q} \cup \ast\}$.
\end{lem}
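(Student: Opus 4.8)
The plan is to exploit that, by Theorem \ref{thm:likelihood}, each $f^j_\ell$ is an explicit (possibly degenerate) Gaussian density in the generalized innovation, so the KL divergence reduces to an elementary first/second-moment computation. First I would take the logarithm of the likelihood \eqref{eq:likelihood}, obtaining for each model $j \in \{q,\ast\}$
\[
\ln f^j_\ell = -\tfrac{1}{2} p_{\tilde{R}^j_\ell} \ln 2\pi - \tfrac{1}{2}\ln|\tilde{R}_{2,\ell}^{j,\star}|_+ - \tfrac{1}{2}\,\overline{\nu}_\ell^{j\top}(\tilde{R}_{2,\ell}^{j,\star})^\dagger \overline{\nu}_\ell^j,
\]
and then form the log-ratio $\ln f^\ast_\ell - \ln f^q_\ell$. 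The deterministic log-normalization terms carry over unchanged and become precisely the first three summands of \eqref{eq:div}, so only the two quadratic forms remain to be averaged against $f^\ast_\ell$.

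The second step is to take $\mathbb{E}_{f^\ast_\ell}[\cdot]$ of the two quadratic terms. Using the cyclic trace identity $\overline{\nu}^\top A \overline{\nu} = {\rm tr}(A\,\overline{\nu}\,\overline{\nu}^\top)$ together with linearity of expectation, each quadratic form becomes the trace of the pseudoinverse multiplied by a raw second-moment matrix, i.e. $\mathbb{E}_{f^\ast_\ell}[{\rm tr}(\overline{\nu}_\ell^j \overline{\nu}_\ell^{j\top}(\tilde{R}_{2,\ell}^{j,\star})^\dagger)]$. This directly delivers the first closed-form expression in \eqref{eq:div}. I would stress that the identity is applied to the \emph{uncentered} second moment, so no separate bias term is introduced; this matters because, under the mismatched true law $f^\ast_\ell$, the innovation $\overline{\nu}_\ell^q$ of model $q$ need not be zero-mean, yet its contribution is still captured entirely by $\mathbb{E}_{f^\ast_\ell}[\overline{\nu}_\ell^q \overline{\nu}_\ell^{q\top}]$.

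The final step evaluates the two second-moment matrices. By the definition given in the lemma, $\mathbb{E}_{f^\ast_\ell}[\overline{\nu}_\ell^q \overline{\nu}_\ell^{q\top}] = \tilde{R}_{2,\ell}^{q|\ast,\star}$, producing the penultimate trace term. For the matched term, Theorem \ref{thm:g-inno} guarantees that under its own (true) model the generalized innovation is zero-mean with the second moment determined by $\tilde{R}_{2,\ell}^{\ast,\star}$, so $\mathbb{E}_{f^\ast_\ell}[\overline{\nu}_\ell^\ast \overline{\nu}_\ell^{\ast\top}] = \tilde{R}_{2,\ell}^{\ast,\star}$; substituting both yields the second closed form.

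The main obstacle I anticipate is the bookkeeping around rank-deficiency: since $\tilde{R}_{2,\ell}^{\star}$ may be singular, one must work consistently with the Moore--Penrose pseudoinverse and the pseudodeterminant $|\cdot|_+$, and verify that the density of Theorem \ref{thm:likelihood} is genuinely the degenerate Gaussian supported on the range of $\tilde{R}_{2,\ell}^{\star}$ (equivalently, on the row space of $\tilde{\Gamma}_\ell$). In particular, care is needed to confirm that $(\tilde{R}_{2,\ell}^{\ast,\star})^\dagger \tilde{R}_{2,\ell}^{\ast,\star}$ acts as the identity on the relevant subspace, so the matched trace term collapses consistently (indeed ${\rm tr}(\tilde{R}_{2,\ell}^{\ast,\star}(\tilde{R}_{2,\ell}^{\ast,\star})^\dagger) = p_{\tilde{R}^\ast_\ell}$), and to ensure that the projection $\tilde{\Gamma}_\ell$ underlying $\overline{\nu}_\ell$ is accounted for identically across the two models so that the dimension mismatch $p_{\tilde{R}^q_\ell} - p_{\tilde{R}^\ast_\ell}$ enters only through the explicit $\ln 2\pi$ prefactor.
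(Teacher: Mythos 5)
Your proposal is correct and follows essentially the same route as the paper, which gives no separate proof of Lemma \ref{lem:KL} beyond the chain of equalities in \eqref{eq:div} itself: take logs of the (degenerate) Gaussian likelihoods from Theorem \ref{thm:likelihood}, average the log-ratio under $f^\ast_\ell$, convert the quadratic forms via the cyclic trace identity, and substitute $\mathbb{E}_{f^\ast_\ell}[\overline{\nu}^q_\ell\overline{\nu}^{q\top}_\ell]=\tilde{R}_{2,\ell}^{q|\ast,\star}$ and $\mathbb{E}_{f^\ast_\ell}[\overline{\nu}^\ast_\ell\overline{\nu}^{\ast\top}_\ell]=\tilde{R}_{2,\ell}^{\ast,\star}$. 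Your added care about the uncentered second moment under model mismatch and the pseudoinverse/pseudodeterminant bookkeeping (including ${\rm tr}(\tilde{R}_{2,\ell}^{\ast,\star}(\tilde{R}_{2,\ell}^{\ast,\star})^\dagger)=p_{\tilde{R}^\ast_\ell}$) is consistent with, and slightly more explicit than, what the paper records.
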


\begin{thm}[Mean Convergence]\label{thm:mean_conv_dyn}
Suppose the following holds (the true model is denoted $\ast)$: 
\begin{description}
\item[Condition (i)\label{cond5}] 
There exist a time step $T \in \mathbb{N}$ and a unique `closest' model $q \in \mathcal{Q}$ such that $$D(f^\ast_\ell \| f^q_\ell)-\ln {\mu^{q,-}_\ell} <D(f^\ast_\ell \| f^{q'}_\ell) -\ln {\mu^{q',-}_\ell},$$ 
 for all $q' \in \mathcal{Q}, q' \neq q$ for all $\ell \geq T$, with 
 $\mu^{\text{--},j}_\ell\triangleq P(q_\ell=j|Z^{\ell-1})=\sum_{i=1}^\mathfrak{N} p_{i j} \mu^i_{\ell-1}$ for $j=q,q'$. 
\end{description} 
Then, the dynamic multiple-model filter is mean convergent 
to this `closest' model $q \in \mathcal{Q}$ in the set of models.
\end{thm}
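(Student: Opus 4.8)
The plan is to track, under the true model, the \emph{log-ratio} $\ln(\mu^q_k/\mu^{q'}_k)$ of the posterior mode probability of the candidate closest model $q$ against each competitor $q'\neq q$, and to show that this ratio drifts so that $q$ dominates in geometric mean. First I would start from the posterior update \eqref{eq:mu_dyn}. Writing $\mu^{-,j}_k\triangleq\sum_{i=1}^{\mathfrak{N}}p_{ij}\mu^i_{k-1}$ for the mixed prior and $f^j_k$ for the likelihood \eqref{eq:likelihood}, the normalizing denominator is common to all modes and cancels in the ratio, leaving
\[ \ln\frac{\mu^q_k}{\mu^{q'}_k}=\big(\ln f^q_k-\ln f^{q'}_k\big)+\big(\ln\mu^{-,q}_k-\ln\mu^{-,q'}_k\big). \]

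Next I would condition on the measurement history $Z^{k-1}$ and take the expectation over $z_{2,k}$ under the true-model density $f^\ast_k$. The prior terms are $Z^{k-1}$-measurable and pass through unchanged. For the likelihood terms, Theorem~\ref{thm:g-inno} ensures that the true-model generalized innovation is zero-mean Gaussian white, so that conditionally on $Z^{k-1}$ the sample $z_{2,k}$ is genuinely drawn from $f^\ast_k$; adding and subtracting $\ln f^\ast_k$ then gives $\mathbb{E}_{f^\ast}[\ln f^j_k-\ln f^\ast_k\mid Z^{k-1}]=-D(f^\ast_k\|f^j_k)$, and the common $\ln f^\ast_k$ cancels. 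Substituting the closed form of the divergence from Lemma~\ref{lem:KL} yields
\[ \mathbb{E}_{f^\ast}\!\left[\ln\frac{\mu^q_k}{\mu^{q'}_k}\,\Big|\,Z^{k-1}\right]=\big[D(f^\ast_k\|f^{q'}_k)-\ln\mu^{-,q'}_k\big]-\big[D(f^\ast_k\|f^q_k)-\ln\mu^{-,q}_k\big]. \]

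Then Condition (i) applies verbatim with $\ell=k$: for every $q'\neq q$ and every $k\geq T$ the bracket attached to $q$ is strictly smaller than the one attached to $q'$, so the right-hand side is strictly positive. Thus, beyond time $T$, in conditional (hence, after taking total expectation, unconditional) geometric mean, model $q$ is strictly more probable than any single competitor. I would then combine these $\mathfrak{N}-1$ strict inequalities with the normalization $\sum_j\mu^j_k=1$ and the elementary bound $\ln(1/\mu^q_k)\le\sum_{q'\neq q}\mu^{q'}_k/\mu^q_k$ to pass from pairwise dominance to $\mathbb{E}_{f^\ast}[\ln\mu^q_k]\to 0$, i.e.\ the geometric mean of $\mu^q_k$ converging to $1$ regardless of the initialization $\{\mu^j_0\}$.

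The hard part will be precisely this final passage from per-step dominance to the limit $\to 1$. In the deterministic-mode special case the choice $p_{ij}=\delta_{ij}$ makes $\mu^{-,j}_k=\mu^j_{k-1}$, so the log-ratio \emph{telescopes} into an accumulating sum $\sum_{\ell\le k}[D(f^\ast_\ell\|f^{q'}_\ell)-D(f^\ast_\ell\|f^q_\ell)]$ of positive KL gaps and divergence of the ratio is immediate. Under Markovian mixing $\mu^{-,j}_k=\sum_i p_{ij}\mu^i_{k-1}$ this telescoping is destroyed: the transition matrix largely resets the prior ratio at each step, so a single positive one-step drift does not by itself accumulate. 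The delicate point is therefore to show that Condition (i), imposed for \emph{all} $\ell\ge T$ along the realized trajectory, can only persist if $\mu^{-,q'}_\ell\to 0$ (the growing $-\ln\mu^{-,q'}_\ell$ terms supplying the drift the mixing removes), which is what ultimately forces $\mu^q_\ell\to 1$; carrying this out rigorously requires uniform control of the entries $p_{ij}$ and of the mixing contribution to the prior ratio.
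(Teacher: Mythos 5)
Your derivation is, up to the point where you stop, exactly the paper's argument: the log-ratio decomposition of the posterior update \eqref{eq:mu_dyn}, the conditional expectation under $f^\ast_k$ turning the likelihood terms into KL divergences via Lemma~\ref{lem:KL}, and the resulting one-step identity are precisely Lemma~\ref{lem:ratios}, equation \eqref{eq:geometric_mean_dyn}; applying Condition~(i) then makes the right-hand side strictly positive for $k\geq T$. The paper's entire proof of Theorem~\ref{thm:mean_conv_dyn} consists of the single sentence that the result ``follows directly'' from Condition~(i) and Lemma~\ref{lem:ratios}, so you have reproduced everything the published proof actually contains.

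The ``hard part'' you flag is therefore not something you missed relative to the paper --- it is a step the paper does not carry out either, and your diagnosis of why it is nontrivial is correct. In the static case the exponent in \eqref{eq:geometric_mean} is a sum $\sum_{\ell=1}^{k}\bigl(D(f^\ast_\ell\|f^{q'}_\ell)-D(f^\ast_\ell\|f^{q}_\ell)\bigr)$ that accumulates, so pairwise ratios diverge and the geometric mean of $\mu^q_k$ tends to $1$. In the dynamic case \eqref{eq:geometric_mean_dyn} gives only a single-step expression: Condition~(i) yields $\overline{\mu}^q_k>\overline{\mu}^{q'}_k$ at each $k\geq T$, which bounds $\overline{\mu}^q_k$ away from zero but does not by itself force $\overline{\mu}^q_k\to 1$ --- e.g.\ a bounded positive gap $\Delta_k$ is consistent with the mode probabilities settling at a non-degenerate profile, since with strictly positive $p_{ij}$ the prior terms $-\ln\mu^{q',-}_\ell$ stay bounded. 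Your proposed escape (that Condition~(i) holding for all $\ell\geq T$ can only persist if $\mu^{-,q'}_\ell\to 0$) is plausible but is left as a sketch, so your write-up, like the paper's, stops short of a complete proof of the limit; if you want a closed argument you would need either to unroll the recursion for $\ln\mu^{q,-}_k-\ln\mu^{q',-}_k$ and show the divergence gaps accumulate, or to strengthen Condition~(i) to a uniform, summably divergent gap.
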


\begin{thm}[Mean Consistency]\label{thm:mean_cons_dyn}
If \nameref{cond5} holds for the true model $\ast \in \mathcal{Q}$ (in the set of models), then the dynamic multiple-model filter is mean consistent. 
\end{thm}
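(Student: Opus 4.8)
The plan is to obtain mean consistency as an immediate corollary of Theorem~\ref{thm:mean_conv_dyn}, exploiting the fact that mean consistency is precisely mean convergence specialized to the case in which the unique `closest' model coincides with the true model $\ast$.

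First, I would record the elementary but essential fact that the KL divergence of the true model's generalized-innovation density from itself vanishes, $D(f^\ast_\ell \| f^\ast_\ell)=0$, whereas $D(f^\ast_\ell \| f^q_\ell)\geq 0$ for every competing model $q\in\mathcal{Q}$ by the non-negativity of the KL divergence (Gibbs' inequality). The vanishing can be read off directly from the divergence formula \eqref{eq:div} of Lemma~\ref{lem:KL}: substituting $q=\ast$, the two log-pseudodeterminant terms and the two trace terms cancel pairwise, leaving zero. Hence, disregarding the log-prior correction $-\ln\mu^{q,-}_\ell$, the true model always attains the minimum divergence, which is exactly why $\ast$ is the natural candidate for the `closest' model in the sense of \nameref{cond5}.

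Next, under the hypothesis that \nameref{cond5} holds for the true model $\ast$---that is, $\ast$ is the unique model with $D(f^\ast_\ell \| f^\ast_\ell)-\ln\mu^{\ast,-}_\ell < D(f^\ast_\ell \| f^{q'}_\ell)-\ln\mu^{q',-}_\ell$ for all $q'\in\mathcal{Q}$, $q'\neq\ast$, and all $\ell\geq T$---the hypotheses of Theorem~\ref{thm:mean_conv_dyn} are satisfied verbatim with the `closest' model taken to be $\ast$. Applying Theorem~\ref{thm:mean_conv_dyn} then yields that the geometric mean of the mode probability $\mu^\ast_k$ of the true model asymptotically converges to $1$ for all initial mode probabilities.

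Finally, I would invoke the definitions directly: mean convergence to the true model $\ast$ is, by the definition of mean consistency, exactly the assertion that the filter is mean consistent, which completes the argument. The only substantive content is thus the reduction to Theorem~\ref{thm:mean_conv_dyn}; there is no genuine obstacle once that theorem is in hand, since consistency is by construction the special case of convergence in which the target model is the ground truth. The sole point warranting care is confirming that $D(f^\ast_\ell \| f^\ast_\ell)=0$, so that $\ast$ indeed competes favorably in \nameref{cond5}; this is precisely the pairwise cancellation in \eqref{eq:div} noted above, which also implicitly requires the prior $\mu^{\ast,-}_\ell$ to remain bounded away from zero so that the $-\ln\mu^{\ast,-}_\ell$ term stays finite.
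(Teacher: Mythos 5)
Your proposal is correct and follows essentially the same route as the paper: the paper proves this theorem by applying \nameref{cond5} (with the `closest' model taken to be $\ast$) to Lemma \ref{lem:ratios}, which is exactly the content of Theorem \ref{thm:mean_conv_dyn} that you invoke. Your additional observations that $D(f^\ast_\ell \| f^\ast_\ell)=0$ and that the log-prior term must stay finite are sound but not needed, since the hypothesis already asserts that \nameref{cond5} holds with $q=\ast$.
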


Note that \nameref{cond5} has  additional `bias' terms for the KL divergences that result from the introduction of mode transition probabilities, $p_{ij}$. Thus, \nameref{cond5} implies that there exists a \emph{unique} model $q \in \mathcal{Q}$ for all $\ell \geq T$ with a `biased' likelihood function that is closest to the `biased' true model and the other `biased' models are strictly less similar to the `biased' true model, measured in terms of their KL divergences. 
Moreover, Theorem \ref{thm:mean_conv_dyn} implies that if the true model $\ast$ is in the set of models $\mathcal{Q}$ but \nameref{cond5} holds for some $q \in \mathcal{Q}$ where $q \neq \ast$, then the dynamic multiple approach is not mean consistent. This serves as an indication that the `bias', $\ln {\mu^{q,-}_\ell}$, that is introduced into \nameref{cond5} by the mode transition probabilities of the dynamic MM algorithm can negatively influence the mode estimates if incorrectly chosen. On the other hand, if chosen wisely, the mode transition probabilities can increase the convergence rate of the mode estimate to the true model.

\textbf{\emph{Optimality of State and Input Estimates.}}
As discussed earlier, the number of hypotheses (mode history) 
grows exponentially with time and hence, an optimal multiple-model filter is computationally intractable. In fact, it can be shown that the assumption of Markovian mode transitions leads to a corresponding graphical model that is cyclic, for which exact inference algorithms are not known. A common inference algorithm that is employed for such graphs is the ``loopy'' belief propagation (sum-product) algorithm whose convergence is still not well understood \cite{weiss.2000}. Similarly, the dynamic MM filter we propose using hypothesis merging techniques to manage the growing number of hypotheses may also lead to suboptimality of the input and state estimates.  
Nonetheless, it does appear to work well in simulation with suitable choices of the mode transition matrix.

\subsection{Special Case: Static Multiple-Model Estimation} \label{sec:sc}
An important special case for the above dynamic MM estimator is when the true system mode is deterministic and fixed within the time scales of interest, i.e., $p_{ii}=1$ and $p_{ij}=0$ $\forall i,j \in \{1,\hdots,\mathfrak{N}\}, i\neq j$. The implication of this is that, the bank of $\mathfrak{N}$ mode-conditioned simultaneous input and state filters (described in Section \ref{sec:ULISE}) is run independently from each other, since $\mu_k^{i|i} \propto p_{ii} \mu^i_{k-1}$ and $\mu_k^{i|j} = 0$ $\forall i,j \in \{1,\hdots,\mathfrak{N}\}, i\neq j$ in \eqref{eq:mu_ij}. 
However, in order to apply the static MM estimator to the switched linear systems, some heuristic modifications of the static MM estimator are necessary. Firstly, to keep all modes `alive' such that they can be activated when appropriate, an artificial lower bound needs to be imposed on the mode probabilities. Moreover, to deal with unacceptable growth of estimate errors of mismatched filters, reinitialization of the filters may be needed, oftentimes with estimates from the most probable mode.

 This special case is especially useful when no knowledge of mode transitions can be assumed, e.g., in adversarial settings of mode attacks \cite{Yong.Zhu.ea.CDC15}.  More importantly, this special case also has nice properties that can be stronger than for the dynamic MM filter in the previous section. These nice properties will be proven in Section \ref{sec:analysis}:
 
\textbf{\emph{Convergence/Consistency of Mode Estimates.}} In addition to the mean behavior of the mode estimates (Theorems \ref{thm:mean_conv} and \ref{thm:mean_cons}), this special case also allows for the characterization of the behavior of the model probability itself (Theorems \ref{thm:convergence} and \ref{thm:consistency}), but it only applies when the log-likelihood sequence $\big\{\ln \frac{f^j_\ell}{f^i_\ell}\big\}_{\ell=1}^k$ is ergodic (a sufficient condition will be provided in Theorem \ref{thm:ergodicity}), i.e., 
\begin{align} 
 \lim_{k \to \infty} \frac{1}{k}\sum_{l=1}^{k} \ln \frac{f^j_\ell}{f^i_\ell} 
&= \mathbb{E}_{f^\ast} \big[\ln \frac{f^j}{f^i}\big]=D(f^\ast \| f^i)-D(f^\ast \| f^j),  
\label{eq:ergodic} 
\end{align}
where we dropped the subscript $k$ to indicate that the distributions are stationary.
 
 \begin{thm}[Mean Convergence (Static)]\label{thm:mean_conv}
Suppose the true model in not the set of models $\mathcal{Q}$, but there exists a unique `closest' model $q \in \mathcal{Q}$ with minimum KL divergence, i.e., the following holds: 
\begin{description}
\item[Condition (ii)\label{cond2}] The true model $\ast$ is not in the set of models, i.e., $\ast \notin \mathcal{Q}$, but there exist a time step $T \in \mathbb{N}$ and a model $q \in \mathcal{Q}$ such that $D(f^\ast_\ell \| f^q_\ell)<D(f^\ast_\ell \| f^{q'}_\ell)$ 
 for all $q' \in \mathcal{Q}, q' \neq q$ for all $\ell \geq T$, where $D(f^\ast_\ell \| f^q_\ell)$ is given in Lemma \ref{lem:KL}. 
\end{description} 
Then, the static multiple-model filter is mean convergent 
to this `closest' model $q \in \mathcal{Q}$ in the set of models. 
\end{thm}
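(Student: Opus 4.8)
The plan is to specialize the posterior recursion \eqref{eq:mu_dyn} to the static case and reduce the claim to a statement about \emph{expected} log-likelihood ratios, which Lemma \ref{lem:KL} already evaluates in closed form. First I would set $p_{ij}=0$ for $i\neq j$ and $p_{ii}=1$, so that \eqref{eq:mu_dyn} collapses to the plain Bayesian update $\mu^j_k = \mu^j_{k-1} f^j_k / \sum_{\ell} \mu^\ell_{k-1} f^\ell_k$, where $f^j_\ell\triangleq\mathcal{L}(q_\ell=j|z_{2,\ell})$ is the Gaussian likelihood \eqref{eq:likelihood}. Unrolling this recursion from $k=0$ and forming the ratio against any competing mode $q'$ cancels the common normalizing denominator, giving $\mu^q_k/\mu^{q'}_k = (\mu^q_0/\mu^{q'}_0)\prod_{\ell=1}^k f^q_\ell/f^{q'}_\ell$; the constraint $\sum_j \mu^j_k=1$ then recovers each $\mu^j_k$ from these pairwise ratios.

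Next I would take logarithms and the ensemble expectation $\mathbb{E}_{f^\ast}$ under the true data-generating model. By linearity of expectation alone --- crucially, without invoking any ergodicity such as \eqref{eq:ergodic} --- one obtains $\mathbb{E}_{f^\ast}[\ln(\mu^q_k/\mu^{q'}_k)] = \ln(\mu^q_0/\mu^{q'}_0)+\sum_{\ell=1}^k \mathbb{E}_{f^\ast}[\ln(f^q_\ell/f^{q'}_\ell)]$. Writing $\ln(f^q_\ell/f^{q'}_\ell) = \ln(f^\ast_\ell/f^{q'}_\ell) - \ln(f^\ast_\ell/f^q_\ell)$ and applying Lemma \ref{lem:KL} term by term identifies each summand as $D(f^\ast_\ell\|f^{q'}_\ell) - D(f^\ast_\ell\|f^q_\ell)$. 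These expectations are finite because the generalized innovations are Gaussian (Theorem \ref{thm:g-inno}), so each log-likelihood is a quadratic form with finite mean, exactly as tabulated in \eqref{eq:div}. This is precisely where the present \emph{mean} statement is cheaper than the pathwise ones: the ensemble average of the log-ratio equals the KL gap by linearity, whereas the time average does so only under the ergodicity hypothesis used later in Theorem \ref{thm:convergence}.

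I would then invoke \nameref{cond2}: for every $q'\neq q$ and all $\ell\ge T$ the increment $D(f^\ast_\ell\|f^{q'}_\ell)-D(f^\ast_\ell\|f^q_\ell)$ is strictly positive, so the partial sums diverge and $\mathbb{E}_{f^\ast}[\ln(\mu^q_k/\mu^{q'}_k)]\to+\infty$ for each of the finitely many rival modes $q'\neq q$. Equivalently, the ensemble geometric mean of $\mu^{q'}_k$ relative to $\mu^q_k$ tends to zero for every competitor, irrespective of the initial probabilities $\mu^j_0$, which enter only through the bounded constants $\ln(\mu^q_0/\mu^{q'}_0)$.

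Finally I would convert this into geometric-mean convergence of $\mu^q_k$ itself. Using the simplex identity $1/\mu^q_k = 1+\sum_{q'\neq q}\mu^{q'}_k/\mu^q_k$ and the shorthand $X^{q'}_k\triangleq\ln(\mu^q_k/\mu^{q'}_k)$, one has $0\le -\ln\mu^q_k = \ln(1+\sum_{q'\neq q} e^{-X^{q'}_k})\le \sum_{q'\neq q} e^{-X^{q'}_k}$, so it suffices to drive $\mathbb{E}_{f^\ast}[\sum_{q'\neq q} e^{-X^{q'}_k}]$ to zero in order to push $\exp(\mathbb{E}_{f^\ast}[\ln\mu^q_k])$ up to $1$, which is the asserted mean convergence to the closest model $q$. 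This last step is the main obstacle: the map from the diverging log-ratios to $\mathbb{E}_{f^\ast}[\ln\mu^q_k]$ is nonlinear through the normalizing denominator, so one must control the residual $\mathbb{E}_{f^\ast}[\sum_{q'\neq q} e^{-X^{q'}_k}]$ rather than merely the first moments $\mathbb{E}_{f^\ast}[X^{q'}_k]$; the Gaussian tails of Theorem \ref{thm:g-inno}, together with the finiteness established above, are what keep this term integrable and send it to zero. I note that nothing in the argument uses $\ast\in\mathcal{Q}$ --- only the strict minimality of $D(f^\ast\|f^q)$ --- so the companion mean-consistency claim (Theorem \ref{thm:mean_cons}) drops out as the special case $q=\ast$, where $D(f^\ast_\ell\|f^q_\ell)=0$.
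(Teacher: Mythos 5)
Your proposal is correct and follows essentially the same route as the paper: you rederive the ratio identity of Lemma \ref{lem:ratios} (the unrolled static Bayes recursion whose expected logarithm gives \eqref{eq:geometric_mean}), identify the increments with KL-divergence gaps via Lemma \ref{lem:KL}, and let \nameref{cond2} drive every rival ratio of geometric means to zero. The only difference is that you explicitly flag, and sketch a remedy for, the final nonlinear passage from vanishing ratios $\overline{\mu}^{q'}_k/\overline{\mu}^{q}_k \to 0$ to the conclusion $\overline{\mu}^{q}_k \to 1$, a step the paper's proof leaves implicit.
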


 \begin{thm}[Mean Consistency (Static)]\label{thm:mean_cons}
Suppose the following condition holds:
\begin{description}
\item[Condition (iii) \label{cond1}] The true model $\ast$ is in the set of models, i.e., $\ast \in \mathcal{Q}$ and there exists a time step $T \in \mathbb{N}$ such that $f^\ast_\ell \neq f^q_\ell$, or equivalently, $D(f^\ast_\ell \| f^q_\ell) \neq 0$ 
 for all $q'\in \mathcal{Q}, q \neq \ast$ for all $\ell \geq T$, where $D(f^\ast_\ell \| f^q_\ell)$ is given in Lemma \ref{lem:KL}. 
\end{description}
Then, the static multiple-model filter is mean consistent. 
\end{thm}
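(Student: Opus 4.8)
The plan is to reduce this statement to Theorem \ref{thm:mean_conv} (Mean Convergence, Static) by recognizing that, under \nameref{cond1}, the true model $\ast$ plays the role of the unique `closest' model demanded by \nameref{cond2}. Indeed, the KL divergence satisfies $D(f^\ast_\ell \| f^\ast_\ell) = 0$, while \nameref{cond1} forces $D(f^\ast_\ell \| f^q_\ell) \neq 0$ (hence, by nonnegativity of the KL divergence, strictly positive) for every $q \neq \ast$ and every $\ell \geq T$. Thus $\ast$ is the unique minimizer of the divergence from itself, and mean convergence to this minimizer is \emph{by definition} mean consistency. It therefore suffices to re-run the convergence argument with the closest model taken to be $\ast$ rather than some $q \in \mathcal{Q}$.

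To carry this out directly, I would start from the static mode-probability recursion obtained by setting $p_{ii}=1$, $p_{ij}=0$ in \eqref{eq:mu_dyn}, which gives $\mu^j_k = f^j_k \mu^j_{k-1} / \sum_\ell f^\ell_k \mu^\ell_{k-1}$ and hence, upon unrolling, the closed form $\mu^j_k = \mu^j_0 \prod_{\ell=1}^k f^j_\ell \big/ \sum_i \mu^i_0 \prod_{\ell=1}^k f^i_\ell$. For any competing mode $q \neq \ast$ this yields the likelihood-ratio identity $\mu^q_k/\mu^\ast_k = (\mu^q_0/\mu^\ast_0)\prod_{\ell=1}^k (f^q_\ell/f^\ast_\ell)$. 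Taking logarithms and expectations under the true law $f^\ast$, and using $\mathbb{E}_{f^\ast_\ell}[\ln(f^q_\ell/f^\ast_\ell)] = -D(f^\ast_\ell \| f^q_\ell)$ from Lemma \ref{lem:KL}, gives $\mathbb{E}[\ln(\mu^q_k/\mu^\ast_k)] = \ln(\mu^q_0/\mu^\ast_0) - \sum_{\ell=1}^k D(f^\ast_\ell \| f^q_\ell)$.

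By \nameref{cond1} each summand is strictly positive for $\ell \geq T$, so (provided the divergences do not vanish in the limit, which \nameref{cond1} is invoked to rule out) the partial sums diverge and $\mathbb{E}[\ln(\mu^q_k/\mu^\ast_k)] \to -\infty$ for every $q \neq \ast$; equivalently, the geometric mean of each ratio $\mu^q_k/\mu^\ast_k$ tends to $0$. Since there are only finitely many modes and $\sum_j \mu^j_k = 1$, writing $1/\mu^\ast_k = 1 + \sum_{q \neq \ast} \mu^q_k/\mu^\ast_k$ lets me transfer the decay of the ratios to the conclusion that the geometric mean of $\mu^\ast_k$ converges to $1$, which is exactly mean consistency. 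The main obstacle is precisely this last transfer: the divergence of $\mathbb{E}[\ln(\mu^q_k/\mu^\ast_k)]$ controls only the geometric (log) mean of the ratios, and passing from it to $\exp(\mathbb{E}[\ln \mu^\ast_k]) \to 1$ must be done through the normalization identity and the finiteness of $\mathcal{Q}$, not through a naive arithmetic bound such as $\ln(1+x) \le x$, which loses too much. This bookkeeping is exactly what the proof of Theorem \ref{thm:mean_conv} already performs, which is why framing the result as that theorem's specialization to the case $q = \ast$ is the cleanest route.
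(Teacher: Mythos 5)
Your proposal is correct and follows essentially the same route as the paper: the paper likewise specializes the ratio of geometric means in Lemma \ref{lem:ratios} (equation \eqref{eq:geometric_mean}) with $i=\ast$, notes that each summand in the exponent becomes $-D(f^\ast_\ell \| f^j_\ell)<0$ for $\ell \ge T$ under \nameref{cond1}, and concludes that the ratios $\overline{\mu}^j_k/\overline{\mu}^\ast_k$ decay so that the true model's mean probability tends to $1$. Your likelihood-ratio identity is exactly the derivation behind \eqref{eq:geometric_mean}, and the caveat you flag (that the condition only guarantees each divergence is nonzero, not bounded away from zero) is present but unaddressed in the paper's argument as well.
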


\begin{thm}[Convergence (Static)] \label{thm:convergence}
Suppose the sequence $\big\{\ln \frac{f^j_\ell}{f^i_\ell}\big\}_{\ell=1}^k$ is ergodic and the true model is not in the set of models $\mathcal{Q}$, but there exists a unique `closest' model $q \in \mathcal{Q}$ with minimum KL divergence, i.e., the following holds:
\begin{description}
\item[Condition (iv)\label{cond4}] The true model $\ast$ is not in the set of models, i.e., $\ast \notin \mathcal{Q}$, but there exists a unique `closest' model $q \in \mathcal{Q}$ such that $D(f^\ast \| f^q)<D(f^\ast \| f^{q'})$
 for all $q' \in \mathcal{Q}, q' \neq q$ (cf. \cite[Theorem 3.1]{Baram.Feb1978}).
\end{description}
Then, the filter is convergent, i.e., the model probability of this `closest' model  converges almost surely to 1.
\end{thm}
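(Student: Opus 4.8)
The plan is to exploit the collapse of the recursion in the static case: with $p_{ii}=1$ and $p_{ij}=0$ for $i\neq j$, the mixing weight $\mu^{\text{--},j}_k=\sum_i p_{ij}\mu^i_{k-1}$ reduces to $\mu^j_{k-1}$, so the posterior update \eqref{eq:mu_dyn} becomes the pure Bayesian recursion $\mu^j_k = f^j_k\mu^j_{k-1}/\sum_\ell f^\ell_k\mu^\ell_{k-1}$. First I would unroll this recursion. Because the normalizing denominator is common to every mode, it cancels in any ratio, yielding the closed form
\begin{align}
\frac{\mu^{q'}_k}{\mu^{q}_k}=\frac{\mu^{q'}_0}{\mu^{q}_0}\prod_{\ell=1}^{k}\frac{f^{q'}_\ell}{f^{q}_\ell}=\frac{\mu^{q'}_0}{\mu^{q}_0}\exp\Big(\sum_{\ell=1}^{k}\ln\frac{f^{q'}_\ell}{f^{q}_\ell}\Big). \nonumber
\end{align}
This reduces the entire problem to the asymptotics of the log-likelihood-ratio sum in the exponent, for each competing model $q'\neq q$.

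Next I would invoke the ergodicity hypothesis in the form \eqref{eq:ergodic}, which gives the almost-sure limit $\frac{1}{k}\sum_{\ell=1}^{k}\ln\frac{f^{q'}_\ell}{f^{q}_\ell}\to D(f^\ast\|f^{q})-D(f^\ast\|f^{q'})$. Under \nameref{cond4}, $q$ is the \emph{unique} minimizer of the KL divergence among the models in $\mathcal{Q}$, so for every $q'\neq q$ this limit is a strictly negative constant. It follows that the partial sum in the exponent behaves like $k$ times a negative number and hence diverges to $-\infty$ almost surely, which forces $\mu^{q'}_k/\mu^{q}_k\to 0$ a.s.\ for each $q'\neq q$.

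Finally, since the posterior mode probabilities sum to one, I would write $\mu^{q}_k=\big(1+\sum_{q'\neq q}\mu^{q'}_k/\mu^{q}_k\big)^{-1}$ and pass to the limit, concluding $\mu^{q}_k\to 1$ almost surely, which is exactly the asserted convergence. The technical care in this argument is purely in the almost-sure bookkeeping: one must take $\mu^{q}_0>0$ (otherwise the closest model is excluded at initialization) and note that the Gaussian likelihood \eqref{eq:likelihood} is strictly positive a.s., so the log-likelihood ratios are a.s.\ finite and the product form above is legitimate. The substantive difficulty is not in this deterministic-limit bookkeeping but in \emph{establishing} the ergodicity of $\big\{\ln\frac{f^{j}_\ell}{f^{i}_\ell}\big\}$ itself; that is precisely what is deferred to the separate ergodicity result (Theorem \ref{thm:ergodicity}), so within this proof I would simply cite it and let it do the heavy lifting.
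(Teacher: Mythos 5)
Your proposal is correct and follows essentially the same route as the paper: the paper's own (one-line) proof of this theorem says it is ``similar by using \eqref{eq:ratio_mu}'', i.e.\ exactly the unrolled static Bayesian ratio $\frac{\mu^{q'}_k}{\mu^q_k}=\frac{\mu^{q'}_0}{\mu^q_0}\exp\sum_{\ell=1}^k\ln\frac{f^{q'}_\ell}{f^q_\ell}$ combined with the law of large numbers under ergodicity and the strict negativity of $D(f^\ast\|f^q)-D(f^\ast\|f^{q'})$ from \nameref{cond4}. Your write-up merely supplies the details (divergence of the exponent to $-\infty$ a.s., normalization $\mu^q_k=(1+\sum_{q'\neq q}\mu^{q'}_k/\mu^q_k)^{-1}$, and the caveat $\mu^q_0>0$) that the paper omits for conciseness.
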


\begin{thm}[Consistency (Static)] \label{thm:consistency}
Suppose the sequence $\big\{\ln \frac{f^j_\ell}{f^i_\ell}\big\}_{\ell=1}^k$ is ergodic and the following holds: 
\begin{description}
\item[Condition (v) \label{cond3}] The true model $\ast$ is in the set of models, i.e., $\ast \in \mathcal{Q}$ and $f^\ast \neq f^q$, or equivalently, $D(f^\ast \| f^q) \neq 0$ 
 for all $q \in \mathcal{Q}, q \neq \ast$ (cf. \cite[Theorem 3.1]{Baram.Jun1978}).
 \end{description} 
 Then, the filter is consistent, i.e., the model probability of the true model converges almost surely to 1.
\end{thm}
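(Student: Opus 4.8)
The plan is to run the same martingale-free, ergodic-averaging argument used for Theorem \ref{thm:convergence}, but now with the role of the unique ``closest'' model played by the true model $\ast$ itself. First I would observe that under \nameref{cond3} the true model is the strict KL-minimizer over $\mathcal{Q}$: since KL divergence is nonnegative and $\ast \in \mathcal{Q}$, we have $D(f^\ast \| f^\ast)=0$, while \nameref{cond3} forces $D(f^\ast \| f^q)>0$ for every $q \in \mathcal{Q}$ with $q \neq \ast$. Thus $\ast$ is the unique element of $\mathcal{Q}$ at which $D(f^\ast \| \cdot)$ attains its minimum, so the hypotheses are exactly those of the convergence argument with the closest model taken to be $q=\ast$, and consistency is just convergence to $\ast$.

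Concretely, in the static case $p_{ii}=1$, $p_{ij}=0$ for $i\neq j$, so $\sum_i p_{ij}\mu^i_{k-1}=\mu^j_{k-1}$ and the posterior update \eqref{eq:mu_dyn} collapses to the purely multiplicative Bayesian recursion $\mu^j_k = f^j_k \mu^j_{k-1}\big/\sum_{\ell} f^\ell_k \mu^\ell_{k-1}$, where $f^j_k \triangleq \mathcal{L}(q_k=j|z_{2,k})$. Unrolling this recursion for any competing mode $q\neq\ast$ gives
\[
\ln \frac{\mu^q_k}{\mu^\ast_k} = \ln \frac{\mu^q_0}{\mu^\ast_0} + \sum_{\ell=1}^k \ln \frac{f^q_\ell}{f^\ast_\ell}.
\]
Dividing by $k$ and invoking the assumed ergodicity \eqref{eq:ergodic} with $(j,i)=(q,\ast)$, the Cesàro average of the log-likelihood ratios converges almost surely to $\mathbb{E}_{f^\ast}[\ln(f^q/f^\ast)] = D(f^\ast\|f^\ast)-D(f^\ast\|f^q) = -D(f^\ast\|f^q)$, which is strictly negative by \nameref{cond3}. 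Hence $\ln(\mu^q_k/\mu^\ast_k)\to-\infty$, i.e. $\mu^q_k/\mu^\ast_k \to 0$ almost surely, for every $q\neq\ast$. Finally I would convert these finitely many pairwise ratio limits into the claim using the simplex constraint $\sum_j \mu^j_k=1$: writing $1/\mu^\ast_k = 1 + \sum_{q\neq\ast}(\mu^q_k/\mu^\ast_k)$ and letting $k\to\infty$ yields $1/\mu^\ast_k\to 1$, so $\mu^\ast_k\to 1$ almost surely.

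The algebra here is routine; the substantive content lives entirely in the hypotheses. The main obstacle is justifying the replacement of the random Cesàro average of the log-likelihood-ratio sequence $\{\ln(f^q_\ell/f^\ast_\ell)\}$ by its expectation, which is precisely what the ergodicity assumption buys us and which is itself established separately (the sufficient condition in Theorem \ref{thm:ergodicity}). A secondary point to handle carefully is that the likelihoods $f^j_\ell$ are built from the \emph{generalized} innovation with pseudoinverse/pseudodeterminant terms (Theorem \ref{thm:likelihood}, Lemma \ref{lem:KL}), so one must confirm that the log-ratios and their means are finite and that $D(f^\ast\|f^q)$ is well defined in that degenerate-covariance setting; once ergodicity and the strict positivity of $D(f^\ast\|f^q)$ from \nameref{cond3} are in hand, the almost-sure convergence $\mu^\ast_k\to 1$ follows immediately.
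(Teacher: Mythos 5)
Your argument is correct and is essentially the paper's own route: the unrolled Bayesian recursion plus the ergodic average of the log-likelihood ratios is exactly what Lemma \ref{lem:ratios} packages as \eqref{eq:ratio_mu}, and the strict negativity of $-D(f^\ast\|f^q)$ under \nameref{cond3} then drives every ratio $\mu^q_k/\mu^\ast_k$ to zero almost surely, just as in the (omitted, ``similarly shown'') proof the paper sketches via Theorem \ref{thm:convergence}. Your explicit final step converting the finitely many pairwise ratio limits into $\mu^\ast_k \to 1$ via the simplex constraint is a detail the paper leaves implicit, not a different approach.
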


\nameref{cond1} and \nameref{cond3} imply that the likelihood functions for all other models $q \neq \ast$ are not identical to the likelihood function for the true model $q=\ast$ for all $\ell \geq T$. In contrast, when the true model is not in the set of models, \nameref{cond2} and \nameref{cond4} imply that there exists a \emph{unique} model $q \in \mathcal{Q}$ for all $\ell \geq T$ with a likelihood function that is `closest' to the true model and the other models are strictly less similar to the true model, measured in terms of their KL divergences. 

\begin{cor}[Monotone Consistency]\label{cor:monotone}
Even if for some $q \in \mathcal{Q}$, $f^q_\ell = f^\ast_\ell$ happens infinitely often 
(i.e., \nameref{cond3} fails to hold), the posterior model mean probabilities will be no worse than their priors for all $\ell \in \mathbb{N}$.
\end{cor}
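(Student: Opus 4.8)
The plan is to specialize the update \eqref{eq:mu_dyn} to the static case and then show that a single step can only raise the geometric mean of the true model's probability, \emph{without} ever invoking \nameref{cond3}. With $p_{ii}=1$ and $p_{ij}=0$ for $i\neq j$, the initial-condition mixing is inert, so the prior for the true model at step $\ell$ coincides with its posterior at step $\ell-1$, i.e.\ $\mu^{\text{--},\ast}_\ell=\mu^\ast_{\ell-1}$, and the posterior recursion collapses to the pure likelihood update
\[
\mu^\ast_\ell=\frac{f^\ast_\ell\,\mu^\ast_{\ell-1}}{\sum_{\ell'=1}^{\mathfrak{N}} f^{\ell'}_\ell\,\mu^{\ell'}_{\ell-1}}.
\]
Taking logarithms and then the conditional expectation under the true law $f^\ast_\ell$ given $Z^{\ell-1}$ (noting that every $\mu^{\ell'}_{\ell-1}$ and in particular $\mu^\ast_{\ell-1}$ is $Z^{\ell-1}$-measurable and hence factors out), I would write the single-step increment as
\[
\mathbb{E}_{f^\ast_\ell}\!\left[\ln\mu^\ast_\ell\mid Z^{\ell-1}\right]-\ln\mu^\ast_{\ell-1}
=-\,\mathbb{E}_{f^\ast_\ell}\!\left[\ln\frac{\sum_{\ell'}f^{\ell'}_\ell\,\mu^{\ell'}_{\ell-1}}{f^\ast_\ell}\;\Big|\;Z^{\ell-1}\right],
\]
reducing the claim to showing the right-hand bracket has nonnegative sign.

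The core estimate is a Jensen-plus-normalization argument. Since $\ln$ is concave, Jensen's inequality bounds the conditional expectation of the log by the log of the conditional expectation of $\sum_{\ell'}f^{\ell'}_\ell\,\mu^{\ell'}_{\ell-1}/f^\ast_\ell$. Each $f^{\ell'}_\ell$ is a genuine (Gaussian) probability density in the observation — precisely what Theorems \ref{thm:g-inno} and \ref{thm:likelihood} establish — so it integrates to one and $\mathbb{E}_{f^\ast_\ell}[f^{\ell'}_\ell/f^\ast_\ell]=1$ for every $\ell'$; combined with $\sum_{\ell'}\mu^{\ell'}_{\ell-1}=1$, the argument of the outer logarithm equals exactly $1$, so the whole right-hand side is $\le 0$. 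This yields $\mathbb{E}_{f^\ast_\ell}[\ln\mu^\ast_\ell\mid Z^{\ell-1}]\ge\ln\mu^\ast_{\ell-1}$.

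Finally, I would take the total expectation over $Z^{\ell-1}$ and apply the tower property to obtain $\mathbb{E}[\ln\mu^\ast_\ell]\ge\mathbb{E}[\ln\mu^\ast_{\ell-1}]$, i.e.\ the geometric mean $\exp(\mathbb{E}[\ln\mu^\ast_\ell])$ of the true model's probability is nondecreasing in $\ell$, which is exactly the assertion that the posterior mean probability is no worse than the prior for every $\ell$. The point to emphasize is that the inequality is strict only when $f^\ast_\ell\neq\sum_{\ell'}f^{\ell'}_\ell\mu^{\ell'}_{\ell-1}$; equality (no progress) is what occurs precisely when a rival model reproduces the true likelihood, so the failure of \nameref{cond3} costs only strictness, never monotonicity.

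The main obstacle, and the step requiring the most care, is the normalization identity $\mathbb{E}_{f^\ast_\ell}[f^{\ell'}_\ell/f^\ast_\ell]=1$. Because \eqref{eq:likelihood} is a \emph{degenerate} Gaussian expressed through the pseudodeterminant $|\cdot|_+$ and pseudoinverse $\dagger$, and because the ranks $p_{\tilde{R}^{\ell'}_\ell}$ may differ across models, I must first reconcile $f^{\ell'}_\ell$ and $f^\ast_\ell$ as densities on a common observation space so that $\int f^{\ell'}_\ell\,dz_{2,\ell}=1$ is well defined; once the supports are matched, the identity follows immediately from the Gaussian normalization in Theorem \ref{thm:likelihood} and the remaining manipulations are routine.
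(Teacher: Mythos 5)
Your proof is correct, but it follows a different route from the paper's. The paper obtains Corollary \ref{cor:monotone} as a by-product of Lemma \ref{lem:ratios}: in \eqref{eq:geometric_mean} one sets $i=\ast$, so the exponent becomes $-\sum_{\ell}D(f^\ast_\ell\|f^j_\ell)\le 0$ by nonnegativity of the \emph{pairwise} KL divergence, whence the ratio $\overline{\mu}^j_k/\overline{\mu}^\ast_k$ never exceeds the ratio of priors, with strict decrease exactly on the steps where $f^j_\ell\neq f^\ast_\ell$. You instead work with the absolute quantity $\mathbb{E}[\ln\mu^\ast_\ell]$ and show its one-step increment equals $D\bigl(f^\ast_\ell\,\big\|\,\sum_{\ell'}\mu^{\ell'}_{\ell-1}f^{\ell'}_\ell\bigr)\ge 0$, i.e.\ the KL divergence from the truth to the \emph{predictive mixture}; both arguments are Gibbs/Jensen at heart, but yours yields monotonicity of the geometric mean of $\mu^\ast_\ell$ itself rather than of its ratios to rival models, and your conditional-expectation-plus-tower-property formulation is more careful than the paper's informal ``averaged over all states.'' Two small remarks. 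First, your flagged obstacle is actually harmless in the direction you need: $\mathbb{E}_{f^\ast_\ell}[f^{\ell'}_\ell/f^\ast_\ell\mid Z^{\ell-1}]=\int_{\mathrm{supp}(f^\ast_\ell)}f^{\ell'}_\ell\le 1$ regardless of whether the degenerate Gaussian supports match, and $\le 1$ already gives the sign you want after the logarithm; exact normalization is not required. Second, a genuine subtlety shared with the paper (not a defect of your argument relative to it) is that $z^{\ell'}_{2,\ell}$ is a model-dependent projection of $y_\ell$ of possibly different dimension across models, so the likelihoods being compared live on different observation spaces; the paper's update \eqref{eq:mu_dyn} and Lemma \ref{lem:ratios} already take this comparability for granted, and your proof inherits the same convention.
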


\textbf{\emph{Optimality of State and Input Estimates.}}
For the discussion on the optimality of the state and input estimates in this special case, we assume that the true model is in the model set, i.e., $\ast \in \mathcal{Q}$. Otherwise, the state and input estimates corresponding to the most probable model may be biased. The following corollary characterizes the optimality of the state and input estimates when using the multiple-model approach with $\ast \in \mathcal{Q}$. 
\begin{cor} \label{cor:est}
If \nameref{cond1} (or \nameref{cond3}) 
holds, then the state and input estimates in \eqref{eq:outputcombi} converge on average (or almost surely)
 to optimal state and input estimates in the minimum variance unbiased sense.  
\end{cor}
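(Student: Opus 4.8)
The plan is to obtain Corollary \ref{cor:est} directly from the mode-consistency results (Theorems \ref{thm:mean_cons} and \ref{thm:consistency}) together with the per-mode optimality of the filter bank. The crucial structural fact is the output rule \eqref{eq:outputcombi}: the reported state and input estimates are copied verbatim from the filter matched to the MAP mode $\hat{q}_k = \arg\max_j \mu^j_k$. In the static case at hand, $p_{ii}=1$ and $p_{ij}=0$ for $i \neq j$, so the initial-condition mixing \eqref{eq:xmix}--\eqref{eq:Pmix} is inactive and each mode-matched filter runs independently. Consequently, the filter matched to the true model $\ast$ is exactly the minimum-variance unbiased (MVU) filter of Section \ref{sec:ULISE} driven by the true dynamics, so its outputs $\hat{x}^\ast_{k|k}$ and $\hat{d}^\ast_k$ are the target MVU estimates themselves. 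Thus it suffices to show that the MAP selection $\hat{q}_k$ equals $\ast$ in the appropriate sense, since then $\hat{x}_{k|k}=\hat{x}^\ast_{k|k}$, $\hat{d}_k=\hat{d}^\ast_k$, etc., with no residual error.

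For the almost-sure statement, I would invoke Theorem \ref{thm:consistency} under \nameref{cond3}: the model probability of the true model converges almost surely, $\mu^\ast_k \to 1$. Since $\mu^j_k \geq 0$ and $\sum_{j} \mu^j_k = 1$, this forces $\mu^j_k \to 0$ almost surely for every $j \neq \ast$. Hence, on a set of full probability, there is a (realization-dependent) $K$ such that $\mu^\ast_k > \mu^j_k$ for all $j \neq \ast$ and all $k \geq K$, so the $\arg\max$ in \eqref{eq:outputcombi} returns $\hat{q}_k = \ast$ eventually. Therefore the output estimates coincide with the true-model MVU estimates for all $k \geq K$ almost surely, which is the claimed almost-sure convergence.

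For the on-average statement, I would invoke Theorem \ref{thm:mean_cons} under \nameref{cond1}, which gives mean consistency, i.e., the geometric mean of $\mu^\ast_k$ tends to $1$, equivalently $\frac{1}{k}\sum_{\ell=1}^{k} \ln \mu^\ast_\ell \to 0$. Because $\ln \mu^\ast_\ell \leq 0$, this Cesàro limit of nonpositive terms forces $\ln \mu^\ast_\ell \to 0$ along a set of time indices of asymptotic density one; equivalently, for any $\epsilon > 0$ the fraction of $\ell \leq k$ with $\mu^\ast_\ell < 1-\epsilon$ vanishes as $k \to \infty$. Taking $\epsilon < 1/2$, whenever $\mu^\ast_\ell > 1-\epsilon$ the remaining probabilities sum to less than $1/2$, so $\hat{q}_\ell = \ast$ and the output equals the MVU estimate. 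Hence the set of times at which $\hat{q}_\ell = \ast$ has asymptotic density one, which is the sense in which the estimates converge on average and which matches the geometric-mean notion underlying mean consistency.

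The main obstacle is the on-average case: the geometric-mean notion of mean consistency is strictly weaker than pointwise convergence, and the MAP selection is a discontinuous operation, so one cannot simply pass the limit through the $\arg\max$. The argument must instead convert the Cesàro/log convergence of $\mu^\ast_\ell$ into a density-one statement about the time indices where the correct mode is selected, and then identify that on those indices the reported estimate equals the MVU estimate exactly. A secondary point to address is that the heuristic modifications mentioned in Section \ref{sec:sc} (an artificial lower bound on the mode probabilities and occasional reinitialization) are applied for robustness rather than for the idealized analysis, so the corollary is understood to hold for the unmodified recursion, and one should note that these heuristics do not affect the asymptotic selection argument above.
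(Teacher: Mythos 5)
Your proposal takes essentially the same route as the paper: the filter matched to the true model is by construction the minimum-variance unbiased filter, and the consistency theorems force the MAP selection in \eqref{eq:outputcombi} to pick the true mode, so the reported estimates coincide with (hence trivially converge to) the optimal ones. The paper's own proof is exactly this two-step argument, stated in two sentences; your a.s.\ half (using $\mu^\ast_k \to 1$ a.s.\ to get $\hat{q}_k = \ast$ eventually) is a correct and slightly more explicit version of it.

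One caveat on your on-average half: you read ``geometric mean'' as a time average, writing $\frac{1}{k}\sum_{\ell=1}^{k}\ln\mu^\ast_\ell \to 0$ and deducing a density-one-in-time selection statement. But in Lemma \ref{lem:ratios} the geometric mean $\overline{\mu}^q_k$ is an ensemble average, $\exp\mathbb{E}_{f^\ast}[\ln\mu^q_k]$, taken over realizations at each fixed $k$ (the sum over $\ell$ in \eqref{eq:geometric_mean} arises from the product form of the posterior, not from time-averaging the log-probability). So mean consistency gives $\mathbb{E}[\ln\mu^\ast_k]\to 0$; since $-\ln\mu^\ast_k\ge 0$, Markov's inequality then yields $\mu^\ast_k\to 1$ in probability, hence $P(\hat{q}_k=\ast)\to 1$, which is the correct ``on average'' sense in which the estimates converge. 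Your Ces\`aro-to-density-one step is sound mathematics applied to the wrong premise; replacing it with the in-probability argument closes the gap without changing the structure of your proof.
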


\section{Filter Analysis and Proofs} \label{sec:analysis}

We now furnish the proofs for the properties of generalized innovation (Theorems \ref{thm:g-inno} and \ref{thm:likelihood}) and  
the asymptotic analysis of the algorithms presented in Sections \ref{sec:prelim} and \ref{sec:MainResult}.  
We will also provide some verifiable sufficient conditions  for the ergodicity of the sequence $\big\{\ln \frac{f^j_\ell}{f^i_\ell}\big\}_{\ell=1}^k$ in Section \ref{sec:ergodicity}. 
To aid the analysis for the average model probability behavior, we first find the ratio of the geometric means of model probabilities (denoted $\overline{\mu}^q_k$ for mode $q \in \mathcal{Q}$). 
\begin{lem} \label{lem:ratios}
The ratio of the geometric means of model probabilities (with true mode $\ast$) is given by 
\begin{align} \label{eq:geometric_mean_dyn}
\displaystyle\frac{\overline{\mu}^j_k}{\overline{\mu}^i_k} &=  
 \frac{\mu^{j,-}_k}{\mu^{i,-}_k} \exp  \mathbb{E}_{f^\ast_k} \left[\ln \frac{f^j_k}{f^i_k}\right]  \\ 
 \nonumber &=  \exp [ (D(f^\ast_k \| f^i_k)-\ln {\mu^{i,-}_k})-(D(f^\ast_k \| f^j_k)-\ln{\mu^{j,-}_k})].
\end{align}
In the special case in Section \ref{sec:sc}, 
the ratio reduces to
\begin{align}
\displaystyle\frac{\overline{\mu}^j_k}{\overline{\mu}^i_k}  =\frac{\mu^j_0}{\mu^i_0} \exp \sum_{\ell=1}^k \big(D(f^\ast_\ell \| f^i_\ell)-D(f^\ast_\ell \| f^j_\ell)\big), \label{eq:geometric_mean}
\end{align}
where $\frac{\mu_0^j}{\mu_0^i}$ is the ratio of priors. 
Moreover, if the sequence $\big\{\ln \frac{f^j_\ell}{f^i_\ell}\big\}_{\ell=1}^k$ is ergodic  (i.e., \eqref{eq:ergodic} holds), the ratio of model probabilities 
becomes
\begin{align}
 \lim_{k \to \infty} \frac{\mu^j_\ell}{\mu^i_k} 
=\lim_{k \to \infty} \frac{\mu^j_0}{\mu^i_0} \exp ({ k [D(f^\ast \| f^i)-D(f^\ast \| f^j)]}).\label{eq:ratio_mu}
\end{align} 
\end{lem}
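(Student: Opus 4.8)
The plan is to exploit the multiplicative structure of the Bayesian mode-probability recursion \eqref{eq:mu_dyn}, so that the realization-dependent normalizing denominator cancels in any ratio $\mu^j_k/\mu^i_k$, and then pass to logarithms and take expectations under the true model. Throughout I read the geometric mean as $\overline{\mu}^j_k = \exp(\mathbb{E}_{f^\ast}[\ln \mu^j_k])$, so that ratios of geometric means become exponentials of expected log-ratios, and the whole lemma reduces to algebra on log-likelihood ratios together with the divergence formula of Lemma \ref{lem:KL}.

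First I would treat the dynamic case. Writing $f^j_k = \mathcal{L}(q_k=j|z_{2,k})$ and $\mu^{j,-}_k = \sum_{i}p_{ij}\mu^i_{k-1} = P(q_k=j|Z^{k-1})$, equation \eqref{eq:mu_dyn} reads $\mu^j_k = f^j_k\mu^{j,-}_k / \sum_\ell f^\ell_k \mu^{\ell,-}_k$, so the common denominator cancels and $\ln(\mu^j_k/\mu^i_k) = \ln(f^j_k/f^i_k) + \ln(\mu^{j,-}_k/\mu^{i,-}_k)$. Taking the conditional expectation $\mathbb{E}_{f^\ast_k}[\cdot]$ given $Z^{k-1}$, under which $\mu^{j,-}_k$ is known and hence extractable, and exponentiating yields the first equality in \eqref{eq:geometric_mean_dyn}. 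For the second equality I would add and subtract $\ln f^\ast_k$ inside the expectation, using the definition of KL divergence in Lemma \ref{lem:KL} to rewrite $\mathbb{E}_{f^\ast_k}[\ln(f^j_k/f^i_k)] = D(f^\ast_k\|f^i_k) - D(f^\ast_k\|f^j_k)$, and then regroup the $\ln\mu^{\cdot,-}_k$ terms to obtain the stated `biased' divergences.

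Next I would specialize to the static case, where $p_{ii}=1$ and $p_{ij}=0$ collapse the mixing so that $\mu^{j,-}_k = \mu^j_{k-1}$. The one-step ratio then becomes the recursion $\mu^j_k/\mu^i_k = (f^j_k/f^i_k)(\mu^j_{k-1}/\mu^i_{k-1})$, which unrolls to $\mu^j_k/\mu^i_k = (\mu^j_0/\mu^i_0)\prod_{\ell=1}^k f^j_\ell/f^i_\ell$. Taking logarithms and expectations, and invoking that each per-step expectation equals the KL difference $D(f^\ast_\ell\|f^i_\ell)-D(f^\ast_\ell\|f^j_\ell)$, gives \eqref{eq:geometric_mean} after exponentiation. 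The point that legitimizes the summation is that, in the decoupled static setting, each filter's covariance recursions (namely $P^{\star x}_{\ell|\ell}$, $\tilde{R}^{q,\star}_{2,\ell}$, and the gains) are deterministic, so the divergence expressions of Lemma \ref{lem:KL} are deterministic and factor out of the expectation term by term.

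Finally, for the ergodic limit I would start from the exact static product, write its exponent as $k$ times the empirical time-average $\tfrac1k\sum_{\ell=1}^k \ln(f^j_\ell/f^i_\ell)$, and invoke the ergodicity hypothesis \eqref{eq:ergodic} to replace this average by its limit $D(f^\ast\|f^i)-D(f^\ast\|f^j)$ with the stationary distributions; this delivers \eqref{eq:ratio_mu} as an almost-sure statement about the actual (not merely geometric-mean) probabilities. The main obstacle I anticipate is bookkeeping the level of conditioning: in the dynamic case the prior $\mu^{j,-}_k$ is itself realization-dependent through the past, so \eqref{eq:geometric_mean_dyn} must be read as a one-step conditional identity with $\mu^{j,-}_k$ held fixed, whereas only the static decoupling, with its deterministic and mutually independent covariance recursions, justifies carrying the expectation through a genuine multi-step product and summing the divergences across time.
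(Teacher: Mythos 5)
Your proposal is correct and follows essentially the same route as the paper's own (much terser) proof: take the ratio of \eqref{eq:mu_dyn} so the normalizer cancels, pass to logs and expectations under $f^\ast_k$ to get \eqref{eq:geometric_mean_dyn}, specialize $p_{ii}=1$, $p_{ij}=0$ to unroll the recursion for \eqref{eq:geometric_mean}, and invoke ergodicity/the law of large numbers for \eqref{eq:ratio_mu}. Your added remarks on the level of conditioning (treating \eqref{eq:geometric_mean_dyn} as a one-step identity with $\mu^{j,-}_k$ held fixed) make explicit a point the paper glosses over, but do not change the argument.
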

\begin{proof}
The expression in \eqref{eq:geometric_mean_dyn} is obtained by taking the geometric mean of  \eqref{eq:mu_dyn} (averaged over all states). Then, for the special case, \eqref{eq:geometric_mean} is obtained since $p_{ii}=1$ and $p_{ij}=0$ $\forall i,j \in \{1,\hdots,\mathfrak{N}\}, i\neq j$. Moreover, \eqref{eq:ratio_mu} can be found from \eqref{eq:mu_dyn} with $p_{ii}=1$ and $p_{ij}=0$ $\forall i \neq j$  by applying 
the law of large numbers in \eqref{eq:ergodic}.
\end{proof}

\subsection{Proof of Theorem \ref{thm:g-inno}}
To prove the whiteness property of the generalized innovation, we substitute \eqref{eq:z2} into \eqref{eq:g-inno} to obtain 
\begin{align}
\nu_k = \tilde{\Gamma}_k (C_{2,k} \tilde{{x}}^\star_{k|k}+v_{2,k}). \label{eq:nu_k_G}
\end{align}
Since $\mathbb{E}[\tilde{{x}}^\star_{k|k}]=0$ and $\mathbb{E}[v_{2,k}]=0$ for all $k$ as is proven in \cite[Lemma 8]{Yong.Zhu.ea.Automatica15}, it follows that the generalized innovation has zero mean, i.e., $\mathbb{E}[\nu_k]=0$, with covariance 
\begin{align*}
\mathbb{E}[\nu_k \nu_j^\top]=\mathbb{E}[ \tilde{\Gamma}_k (C_{2,k} \tilde{x}^\star_{k|k}+v_{2,k}) (C_{2,j} \tilde{x}^\star_{j|j}+v_{2,j})^\top  \tilde{\Gamma}_j^\top].
\end{align*}
We first show that the above covariance is zero when $k\neq j$. Without loss of generality, we assume that $k>j$. From the properties of the filter, we have $\mathbb{E}[v_{2,k}\tilde{x}^{\star \top}_{j|j}]=\mathbb{E}[v_{2,k} v_{2,j}^\top]=0$, thus the covariance reduces to
\begin{align}
\mathbb{E}[\nu_k \nu_j^\top]= \tilde{\Gamma}_k C_{2,k}( \mathbb{E}[ \tilde{x}^\star_{k|k} \tilde{x}^{\star \top}_{j|j}] C_{2,k}^\top+ \mathbb{E}[ \tilde{x}^\star_{k|k} v_{2,j}^\top]) \tilde{\Gamma}_j^\top. \label{eq:nu_cov}
\end{align}
Next, to evaluate $\mathbb{E}[\tilde{x}^\star_{k|k} \tilde{x}^{\star \top}_{j|j}]$ and $\mathbb{E}[ \tilde{x}^\star_{k|k} v_{2,j}^\top]$, we first evaluate the \textit{a priori} estimation error:
\begin{align}
\begin{array}{l}
\tilde{x}^\star_{k+1|k+1} = x_{k+1}-\hat{x}^\star_{k+1|k+1}\\
=\overline{A}_{k} (I-\tilde{\overline{L}}_{k} \tilde{\Gamma}_{k} C_{2,k}) \hat{x}^\star_{k|k}+(I-G_{2,k} M_{2,k+1} C_{2,k+1})w_{k}-G_{2,k} M_{2,k+1} v_{2,k+1}\\
\quad + G_{2,k} M_{2,k+1} C_{2,k+1} G_{1,k} M_{1,k} v_{1,k} -\overline{A}_k \tilde{\overline{L}}_k \tilde{\Gamma}_k v_{2,k}\\
\triangleq  \Phi_k \tilde{x}_{k|k}^\star + v'_k, \label{eq:xhatstar}
\end{array}
\end{align}
where $\Phi_k$ and $v'_k$ are defined above, while $\overline{A}_k\triangleq (I-G_{2,k} M_{2,k+1} C_{2,k+1}) \hat{A}_k$ and $\hat{A}_k\triangleq A_k-G_{1,k} M_{1,k} C_{1,k}$. Using the state transition matrix of the error system 
\begin{align*}
\Phi_{k|j}=\left\{ \begin{array}{ll} \Phi_{k-1}\Phi_{k-1} \hdots \Phi_j = \Phi_{k|j+1} \Phi_j, & k>j  \\ I, & k=j, \end{array}\right.
\end{align*}
the state estimate error is given by
\begin{align} \label{eq:xtilde_star} 
\textstyle \tilde{x}^\star_{k|k}=\Phi_{k|j} \tilde{x}^\star_{j|j} + \sum^{k-1}_{\ell=j} \Phi_{k|\ell+1} v'_\ell. 
\end{align}  
Thus, from \eqref{eq:xhatstar}, we obtain $\mathbb{E}[v'_\ell \tilde{x}^{\star \top}_{j|j}]=0$ and $\mathbb{E}[v'_\ell v_{2,j}^\top]=0$ when $\ell > j$ (i.e., future noise is uncorrelated with the current estimate error and the current noise) while when $\ell=j$, $\mathbb{E}[v_j' \tilde{x}^{\star \top}_{j|j}]=\overline{A}_j \tilde{\overline{L}}_j \tilde{\Gamma}_j R_{2,j} M_{2,j}^\top G_{2,j-1}^\top$ and $\mathbb{E}[v_j' v_{2,j}^\top]=\overline{A}_j \tilde{\overline{L}}_j \tilde{\Gamma}_j R_{2,j}$. With this and from \eqref{eq:nu_cov}, we can evaluate $\mathbb{E}[\tilde{x}^\star_{k|k} \tilde{x}^{\star \top}_{j|j}]$, $\mathbb{E}[\tilde{x}^\star_{k|k} v_{2,j}^\top]$ and $\mathbb{E}[\nu_k \nu_j^\top]$ as follows:
\begin{align}
&\begin{array}{rl} 
 \mathbb{E}[\tilde{x}^\star_{k|k} \tilde{x}^{\star \top}_{j|j}]\hspace{-0.2cm}&= \Phi_{k|j+1} (\Phi_j P^{\star x}_{j|j} + \overline{A}_j \tilde{\overline{L}}_j \tilde{\Gamma}_j R_{2,j} M_{2,j}^\top G_{2,j-1}^\top)\\ 
 \mathbb{E}[\tilde{x}^\star_{k|k} v_{2,j}^{\top}]  \hspace{-0.2cm}&=-\Phi_{k|j+1} (\Phi_j G_{2,j-1} M_{2,j} R_{2,j} + \overline{A}_j \tilde{\overline{L}}_j \tilde{\Gamma}_j R_{2,j})\\
\Rightarrow \mathbb{E}[\nu_k \nu_j^\top] \hspace{-0.2cm}&=\tilde{\Gamma}_k C_{2,k} \Phi_{k|j+1} (\overline{A}_j \tilde{\overline{L}}_j \tilde{\Gamma}_j R_{2,j} M_{2,j}^\top G_{2,j-1}^\top C_{2,j}^\top \\
& \quad + \Phi_j P^{\star x}_{j|j} C_{2,j}^\top - \Phi_j G_{2,j-1} M_{2,j} R_{2,j} - \overline{A}_j \tilde{\overline{L}}_j \tilde{\Gamma}_j R_{2,j}) \tilde{\Gamma}_j^\top\\
&=\tilde{\Gamma}_k C_{2,k} \Phi_{k|j+1} \overline{A}_j (P^{\star x}_{j|j} C_{2,j}^\top-G_{2,j-1} M_{2,j} R_{2,j}-\tilde{\overline{L}}_j \tilde{\Gamma}_j \tilde{R}^\star_{2,j}) \tilde{\Gamma}_j^\top = 0, \hspace{-0.325cm}
\end{array}\label{eq:autocor}
\end{align}
where $\tilde{R}^\star_{2,j}=C_{2,j} P^{\star x}_{j|j} C_{2,j}^\top +R_{2,j} -R_{2,j} M_{2,j}^\top G_{2,j-1}^\top C_{2,j}^\top-C_{2,j} G_{2,j-1}^\top M_{2,j} R_{2,j}$ and for the final equality, we substituted the filter gain from 
\cite[Theorem 7.6.4]{Yong.thesis2015}:
\begin{align*}
\tilde{\overline{L}}_j=(P^{\star x}_{j|j} C_{2,j}^\top-G_{2,j-1} M_{2,j} R_{2,j}) \tilde{\Gamma}_j^\top (\tilde{\Gamma}_j \tilde{R}^\star_{2,j} \tilde{\Gamma}_j^\top)^{-1}.
\end{align*}
Finally, for $j=k$, we can find $S_k\triangleq \mathbb{E}[\nu_k \nu_k^\top]$ as
\begin{align*}
\begin{array}{l}
S_k = \tilde{\Gamma}_k (C_{2,k} P^{\star x}_{k|k}C_{2,k}^\top-C_{2,k} G_{2,k-1} M_{2,k} R_{2,k}\\
\qquad -R_{2,k} M_{2,k}^\top G_{2,k-1}^\top C_{2,k}^\top+R_{2,k}) \tilde{\Gamma}_k^\top 
=\tilde{\Gamma}_k \tilde{R}^\star_{2,k} \tilde{\Gamma}_k^\top.
\end{array}
\end{align*}
Furthermore, from \eqref{eq:nu_k_G} and \eqref{eq:xtilde_star}, since we assumed that $w_k$ and $v_k$ for all $k$ and $x_0$ are Gaussian, the generalized innovation $\nu_k$ is a linear combination of Gaussian random variables and is thus itself Gaussian. Therefore, we have shown that $\nu_k$ is a Gaussian white noise with zero mean and covariance $S_k$. Moreover, $S_k$ is positive definite since $\tilde{\Gamma}_k$ is chosen such that $S_k$ is invertible \cite[Section 7.6.4]{Yong.thesis2015},\cite[Section 5.4]{Yong.Zhu.ea.Automatica15}. \qed

\begin{remark}
The whiteness of the generalized innovation provides an alternative approach to derive the filter gain $\tilde{\overline{L}}_k$ in \cite{Yong.thesis2015,Yong.Zhu.ea.Automatica15} (as can be seen by setting \eqref{eq:autocor} to zero).
\end{remark}

\subsection{ Proof of Theorem \ref{thm:likelihood}}
To form the likelihood function in Theorem \ref{thm:likelihood}, we exploit the whiteness from property of the the generalized innovation $\nu_k =\tilde{\Gamma}_k\overline{\nu}_k$ from Theorem \ref{thm:g-inno}. 
From this property, we know that the conditional probability density function of $\nu_k$ is given by
\begin{align}
\begin{array}{l}
P(\nu_k|Z^{k-1}) 
=\displaystyle\frac{\exp(-\overline{\nu}_k^\top {\tilde{\Gamma}}^\top_k S_k^{-1} {\tilde{\Gamma}}_k \overline{\nu}_k /2)}{(2\pi)^{p_{\tilde{R}}/2} |S_k|^{1/2}}, \label{eq:disintegration}
\end{array}
\end{align}
where we omitted the conditioning on $q_k$ in this proof for conciseness.
Next, note that if $\tilde{\Gamma}_k$ is chosen as a matrix with orthonormal rows, $\tilde{\Gamma}_k S_k^{-1} \tilde{\Gamma}_k=\tilde{\Gamma}^\top_k (\tilde{\Gamma}_k  \tilde{R}^\star_{2,k} \tilde{\Gamma}_k^\top)^{-1} \tilde{\Gamma}_k$ 
 is the generalized inverse and $|{S}_k|$ the pseudo-determinant of $\tilde{R}^\star_{2,k}$ \cite[pp. 527-528]{Rao.73}. From \cite[Lemma 7.6.3]{Yong.thesis2015}, for the case $p_{\tilde{R}}=l-p < l-p_{H_k}$, we also see that \eqref{eq:disintegration} represents the Gaussian distribution of $\overline{\nu}_k \in \mathbb{R}^{l-p_{H_k}}$ whose base measure is restricted to the $p_{\tilde{R}}$-dimensional affine subspace where the Gaussian distribution is supported. On the other hand, when $H_k$ has full rank (i.e., $p=p_{H_k}$ and $p_{\tilde{R}} =l-p = l-p_{H_k}$), the Gaussian distribution is fully supported in $\mathbb{R}^{l-p}$ and no restriction is necessary.
As shown in \cite[Section 7.6.1]{Yong.thesis2015}, there are multiple ways to choose $\tilde{\Gamma}_k$ and the choice in this theorem is one such instance. 
\qed 

\subsection{Proof of Convergence (Theorems  \ref{thm:mean_conv_dyn}, \ref{thm:mean_conv} and \ref{thm:convergence})}

Theorem \ref{thm:mean_conv_dyn} follows directly from \nameref{cond5} and Lemma \ref{lem:ratios}.
For Theorem \ref{thm:mean_conv}, since \nameref{cond2} holds by assumption, then with $j=q'$ and $i=q$, the summand in the exponent of \eqref{eq:geometric_mean} is always strictly negative, which result in the exponential convergence to zero of the ratios of model mean probabilities of all other models ($q'\in \mathcal{Q}, q' \neq q$) to model $q$. The proof of Theorem \ref{thm:convergence} is similar by using \eqref{eq:ratio_mu} and is omitted for conciseness. \qed

\subsection{Proof of Consistency (Theorems \ref{thm:mean_cons_dyn}, \ref{thm:mean_cons}, \ref{thm:consistency} and Corollary \ref{cor:monotone})}

Theorem \ref{thm:mean_cons_dyn} also follows immediately by the application of \nameref{cond5} to Lemma \ref{lem:ratios}.
To prove Theorem \ref{thm:mean_cons}, we note that since $D(f^\ast_\ell \| f^q_\ell) \geq 0$ with equality if and only if $f^\ast_\ell=f^q_\ell$ (\cite[Lemma 3.1]{Kullback.1951}), then applying \nameref{cond1} with $i=\ast \in \mathcal{Q}$ as the true model and $j \in \mathcal{Q}, j \neq \ast$, the summand in the exponent of \eqref{eq:geometric_mean} is always strictly negative, i.e., $D(f^\ast_\ell \| f^\ast_\ell)-D(f^\ast_\ell \| f^j_\ell)=-D(f^\ast_\ell \| f^j_\ell) < 0$ for all $\ell \geq T$ since $f^\ast_\ell \neq f^j_\ell$ by assumption. This means that the ratios of model mean probabilities of all other models ($j\in \mathcal{Q}, j \neq \ast$) to the true model 
converge exponentially to zero, i.e., the mean probability of the true model converges to 1. Theorem \ref{thm:consistency} and Corollary \ref{cor:monotone} can be similarly shown 
and the proof is omitted for brevity. 
\qed

\subsection{Proof of Optimality (Corollary \ref{cor:est})}

For the true model, the filter gains are chosen such that the error covariance is minimized and that the estimates are unbiased (cf. \cite[Section V]{Yong.Zhu.ea.CDC15_General} and \cite[Section 5]{Yong.Zhu.ea.Automatica15} for a detailed derivation and discussion). Hence, the state and input estimates are optimal in the minimum variance unbiased sense. 
If \nameref{cond1} (or \nameref{cond3}) 
holds, by Theorem  \ref{thm:mean_cons} (or Theorem \ref{thm:consistency}), 
the state and input estimates given by \eqref{eq:outputcombi} also converge on average (or almost surely) 
to the state and input estimates of the true model, which are optimal. \qed

\subsection{Sufficient Condition for Ergodicity} \label{sec:ergodicity}

A sufficient condition for the ergodicity of the sequence $\big\{\ln \frac{f^j_\ell}{f^i_\ell}\big\}_{\ell=1}^k$ (for \eqref{eq:ergodic} and Lemma \ref{lem:ratios} to hold) is the stationarity of the matched and mismatched generalized innovation (i.e., when the model is correctly and incorrectly assumed), as is also shown for multiple-model algorithms when inputs are known  
\cite{Baram.Feb1978,Baram.Jun1978}. The existence of a steady-state behavior of closed loop system that is implied by stationarity suggests that the known and unknown inputs should become constant after a finite time. For verifiable sufficient conditions, the eventually constant unknown inputs are assumed to be known after a finite time $T$. In this case, we assume, without loss of generality, that $u_k=0$ and $d_k=0$ for all $k\geq T$.

\begin{thm}[Ergodicity (Static)] \label{thm:ergodicity}
The log-likelihood sequence $\ln \frac{f^j_k}{f^i_k}$ is ergodic if for each model $q \in \mathcal{Q}, q \neq \ast$, the system is strongly detectable and stabilizable, the known and unknown inputs becomes zero after a finite time $T$ and the mismatched system matrix (i.e., the state transition matrix of $[x_{k} \ \hat{x}^q_k]^\top$ for $q\neq \ast$): 
\begin{align*}
A^{\ast,q}\triangleq \begin{bmatrix}   A^\ast & 0 \\ \begin{array}{c}\hat{A}^q (\tilde{L}^q (I-C_{2}^q G_2^q M_2^q)\\+G_2^q M_2^q)T_2^q C^\ast + G_1^q M_1^q T_1^q C^\ast\end{array}  & \begin{array}{c}\hat{A}^q(I-\tilde{L}^q C_2^q)\\(I-G_2^q M_2^q C_2^q) \end{array} \end{bmatrix}, 
\end{align*}\normalsize
is stable\footnote{This implies that the true model $\ast$ is stable and $(\hat{A}^q,C_2)$ is detectable (satisfied by strong detectability (cf. \cite[Corollary 6.4.7]{Yong.thesis2015})). This sufficient but not necessary condition suggests that the state estimates for model $q$ converge to steady-state even when the model is erroneous/mismatched.} 
(i.e., all its eigenvalues are inside the unit circle), where $\tilde{L}^q$ and $M_2^q$ are steady-state matrices of the input and state filter corresponding to model $q$. 
Moreover, we can compute $\tilde{R}_2^{q|\ast,\star}$(cf. \eqref{eq:div},\nameref{cond4}) as: 
\begin{align*}
\tilde{R}_2^{q|\ast,\star}\triangleq \mathbb{E}[\overline{\nu}^q_k\overline{\nu}^{q\,\top}_k]
=(I-C_2^q G_2^q M_2^q)(C^{\ast,q} \Psi^q C^{\ast,q\top}+R_2) (I-C_2^q G_2^q M_2^q)^\top,
\end{align*}
 where $C^{\ast,q} \triangleq \begin{bmatrix} T_2^q C^\ast & -C_2^q\end{bmatrix}$. $\Psi^q=\lim_{k \to \infty} \Psi^q_k$ is the limiting solution of the Lyapunov function 
\begin{align}
\Psi^q_{k+1}=A^{\ast,q} \Psi^q_k A^{\ast,q \, \top} + W^{\ast,q} \breve{Q} W^{\ast,q\,\top},\label{eq:psi} 
\end{align} 
with $W^{\ast,q}=\begin{bmatrix} I & 0 \\ 0 &\tilde{L}^q+G_2^qM_2^q)T_2^q +G_1^q M_1^q T_1^q\end{bmatrix}$ and $\breve{Q}\triangleq \begin{bmatrix} Q & 0 \\ 0 & R\end{bmatrix}$.
\end{thm}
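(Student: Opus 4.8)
The plan is to reduce the ergodicity of $\big\{\ln\frac{f^j_\ell}{f^i_\ell}\big\}$ to the stationarity and ergodicity of the matched and mismatched generalized innovations, and then to certify both from the stability of $A^{\ast,q}$. First I would derive the augmented error dynamics. For a mismatched model $q\neq\ast$ I track the joint process $\xi_k\triangleq[x_k^\top\ \hat{x}^{q\top}_k]^\top$, where $x_k$ obeys the true dynamics (model $\ast$) and $\hat{x}^q_k$ obeys the steady-state filter recursion for model $q$ from Algorithm \ref{algorithm1}. Writing out the time-update and measurement-update steps, substituting the true measurement $y_k$ (hence $z_{1,k}=T_1^q y_k$, $z_{2,k}=T_2^q y_k$) generated by model $\ast$, and using $u_k=0$, $d_k=0$ for $k\geq T$, I obtain a time-invariant linear recursion $\xi_{k+1}=A^{\ast,q}\xi_k+W^{\ast,q}\breve{w}_k$ driven by $\breve{w}_k\triangleq[w_k^\top\ v_k^\top]^\top$ of covariance $\breve{Q}=\diag(Q,R)$. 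Collecting the coefficients of $\xi_k$ identifies the block lower-triangular transition matrix $A^{\ast,q}$ and the input matrix $W^{\ast,q}$ exactly as stated.

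Second, I would express the generalized innovation as an output of this system. After the inputs vanish, $\overline{\nu}^q_k=(I-C_2^q G_2^q M_2^q)(C^{\ast,q}\xi_k+v_{2,k})$ with $C^{\ast,q}=[T_2^q C^\ast\ -C_2^q]$, where $v_{2,k}$ is the fresh measurement noise entering at time $k$. Since all eigenvalues of $A^{\ast,q}$ lie inside the unit circle, the discrete Lyapunov recursion \eqref{eq:psi} admits a unique limiting solution $\Psi^q=\lim_k\Psi^q_k\succeq 0$, so $\xi_k$ converges to a zero-mean stationary Gaussian process with covariance $\Psi^q$. Propagating $\Psi^q$ through the output map and adding the independent contribution $R_2$ of $v_{2,k}$ yields the stationary innovation covariance $\tilde{R}_2^{q|\ast,\star}$ in precisely the stated form. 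Applying the same construction jointly to the filters for $i$ and $j$ (augmenting $x_k$ with both $\hat{x}^i_k$ and $\hat{x}^j_k$, where the case $i=\ast$ or $j=\ast$ is covered by the matched-filter stability noted in the theorem's footnote) makes the stacked process $[\overline{\nu}^{i\top}_k\ \overline{\nu}^{j\top}_k]^\top$ jointly stationary and Gaussian for $k\geq T$, with cross-covariances read off the off-diagonal blocks of the joint Lyapunov solution.

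Third, I would translate this into ergodicity of the log-likelihood ratio. By Theorem \ref{thm:likelihood}, $\ln\frac{f^j_\ell}{f^i_\ell}$ equals a time-invariant constant (the pseudodeterminant and dimension terms, which freeze at steady state) plus the quadratic form $\tfrac{1}{2}(\overline{\nu}^{i\top}_\ell \tilde{R}^{i,\star\dagger}_{2,\ell}\overline{\nu}^i_\ell-\overline{\nu}^{j\top}_\ell \tilde{R}^{j,\star\dagger}_{2,\ell}\overline{\nu}^j_\ell)$. Because $A^{\ast,q}$ is stable, the autocovariance $\mathbb{E}[\overline{\nu}^q_k\overline{\nu}^{q\top}_{k+m}]$ is propagated by $(A^{\ast,q})^m$ and hence decays geometrically in $m$, so the stationary Gaussian innovation has summable autocovariances and is therefore mixing, hence ergodic; any measurable function of an ergodic process---in particular the above quadratic form---is again ergodic. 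The Birkhoff ergodic theorem (the law of large numbers invoked in Lemma \ref{lem:ratios}) then gives \eqref{eq:ergodic}, with the limit equal to $D(f^\ast\|f^i)-D(f^\ast\|f^j)$ by Lemma \ref{lem:KL}. The finite transient before $T$ and the one-step transform $\nu_\ell=\tilde{\Gamma}_\ell\overline{\nu}_\ell$ contribute only a vanishing $1/k$ correction and do not affect the limit.

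The hard part will be the rigorous passage from stationarity to ergodicity of the quadratic functional appearing in the log-likelihood ratio, since stationarity alone is insufficient. The essential step is to show that the geometric stability of $A^{\ast,q}$ forces the innovation autocovariance to be summable, so that the stationary Gaussian innovation is mixing; the classical result that a stationary Gaussian process with vanishing autocovariance is ergodic, together with the fact that measurable images of ergodic processes remain ergodic, then closes the argument. This parallels the known-input development in \cite{Baram.Feb1978,Baram.Jun1978}, which I would follow as a template, with the extra bookkeeping that the generalized innovation uses only the $z_{2,k}$ channel through the projection $I-C_2^q G_2^q M_2^q$.
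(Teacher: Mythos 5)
Your proposal is correct and follows essentially the same route as the paper's proof: both reduce ergodicity to the two Baram--Sandell conditions by setting up the augmented recursion for $[x_k^\top\ \hat{x}^{q\top}_k]^\top$ driven by $A^{\ast,q}$ and $W^{\ast,q}$, invoking stability of $A^{\ast,q}$ together with the existence of steady-state gains (from strong detectability and stabilizability) to obtain the finite limiting Lyapunov solution $\Psi^q$, and reading off $\tilde{R}_2^{q|\ast,\star}$ from the output map. The only difference is that the paper delegates the final ``stationarity implies ergodicity of the log-likelihood ratio'' step entirely to \cite[Lemma 3.1]{Baram.Feb1978}, whereas you prove it directly via geometric decay of the innovation autocovariance and mixing of stationary Gaussian processes --- a more self-contained rendering of the same argument.
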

\begin{proof} The claim is proven by showing that the sufficient conditions for ergodicity when there are no unknown inputs in  \cite[Lemma 3.1]{Baram.Feb1978} also hold for the input and state filter in our case, namely that (i) the $A^{\ast,q}$ matrix 
generating simultaneously the true state $x_k$ and the estimate $\hat{x}^q_k$ for $k\geq T$ with zero inputs: 
\begin{align}
\begin{bmatrix} x_{k+1} \\ \hat{x}^q_{k+1}\end{bmatrix}=A^{\ast,q} \begin{bmatrix} x_{k} \\ \hat{x}^q_{k}\end{bmatrix} + W^{\ast,q} \begin{bmatrix}  w_k \\ v_k \end{bmatrix},\label{eq:Psi}
\end{align}
is stable, and (ii) the limit $\Psi^q=\lim_{k \to \infty} \Psi^q_k$ exists and is finite, where $\Psi^q_{k}\triangleq \mathbb{E}\left[\begin{bmatrix}  x_{k} \\ \hat{x}^q_{k}\end{bmatrix} \begin{bmatrix}  x_{k}^\top & \hat{x}^{q\,\top}_{k}\end{bmatrix} \right]$ is generated by \eqref{eq:psi}. As in \cite{Baram.Feb1978}, the former holds by assumption. To prove the latter, we note that the assumption of strong detectability and stabilizability of each model implies that steady-state $\tilde{L}^q$ and $M_2^q$ matrices exist by  \cite[Theorem 6]{Yong.Zhu.ea.Automatica15}. Since $A^{\ast,q}$ and hence, the state dynamics of $\Psi^q_{k}$ in \eqref{eq:Psi} is stable, the limit $\Psi^q$ exists and is finite, which completes the sufficient conditions needed in \cite[Lemma 3.1]{Baram.Feb1978}. It follows that the sequence $\big\{\ln \frac{f^j_\ell}{f^i_\ell}\big\}^k_{\ell=1}$ is ergodic. 
\end{proof}

\section{Simulation Example} \label{sec:examples}
\begin{figure*}[!t]
\begin{center}
\begin{subfigure}[t]{0.475\textwidth}
\centering
\includegraphics[scale=0.575,trim=7.5mm 8mm 9mm 1mm]{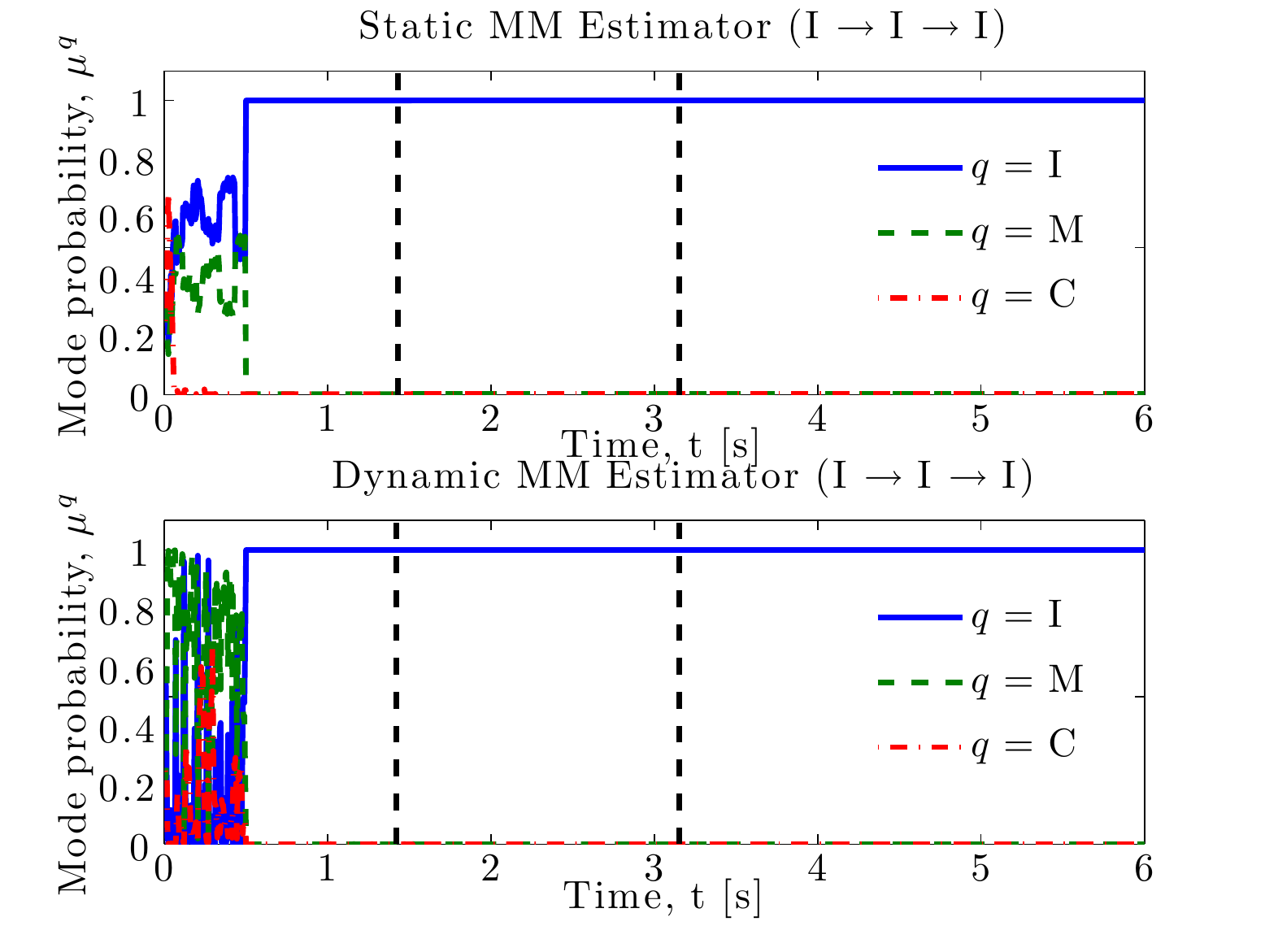}
\caption{Vehicle remain in 'I' mode.} 
\end{subfigure}
\begin{subfigure}[t]{0.475\textwidth}
\includegraphics[scale=0.575,trim=7.5mm 8mm 9mm 1mm]{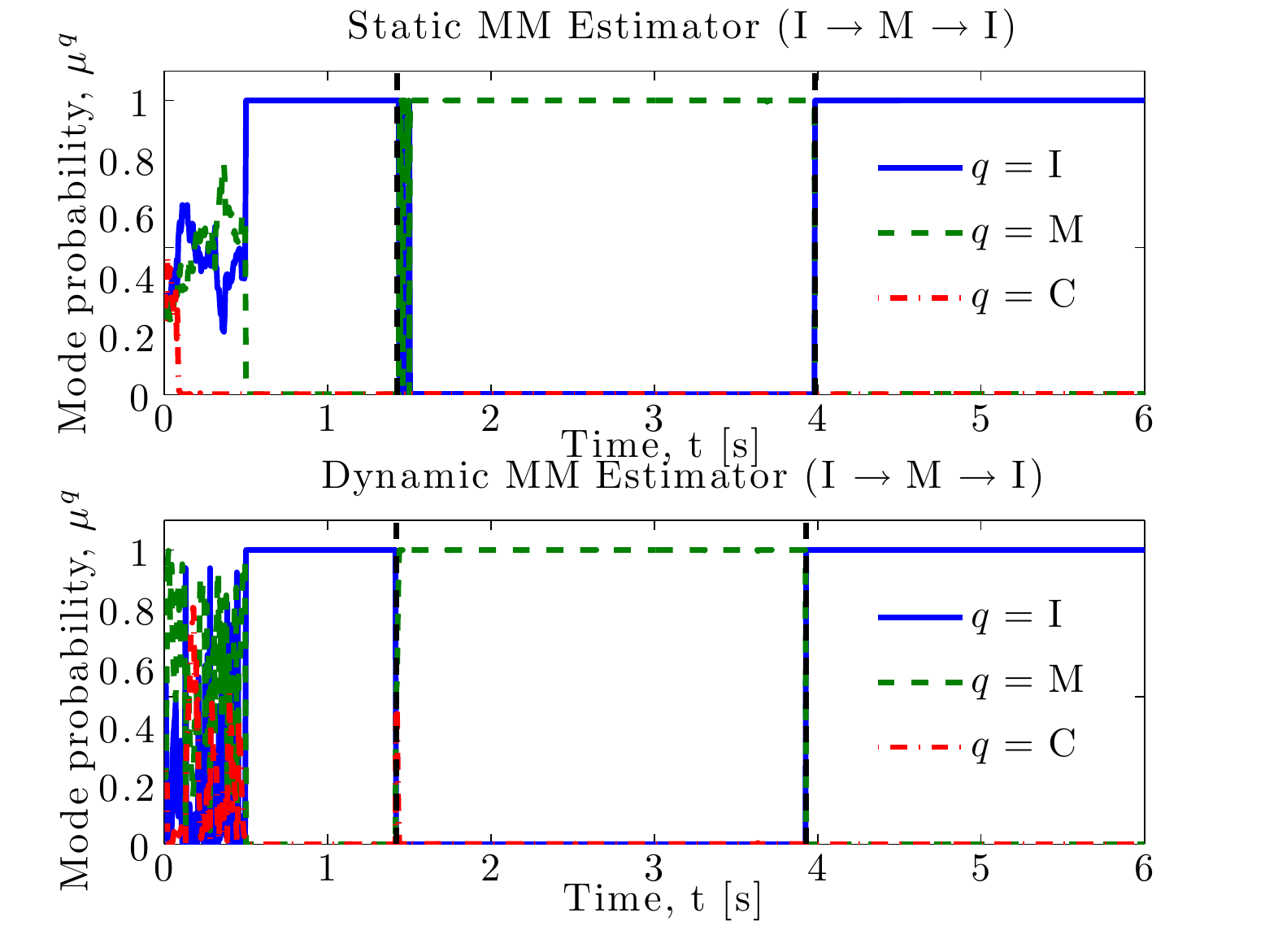}
\caption{Vehicle switches intentions 'I$\rightarrow$M$\rightarrow$I'.}
\end{subfigure}\\
\begin{subfigure}[t]{0.45\textwidth}
\includegraphics[scale=0.575,trim=7.5mm 8mm 9mm -5mm]{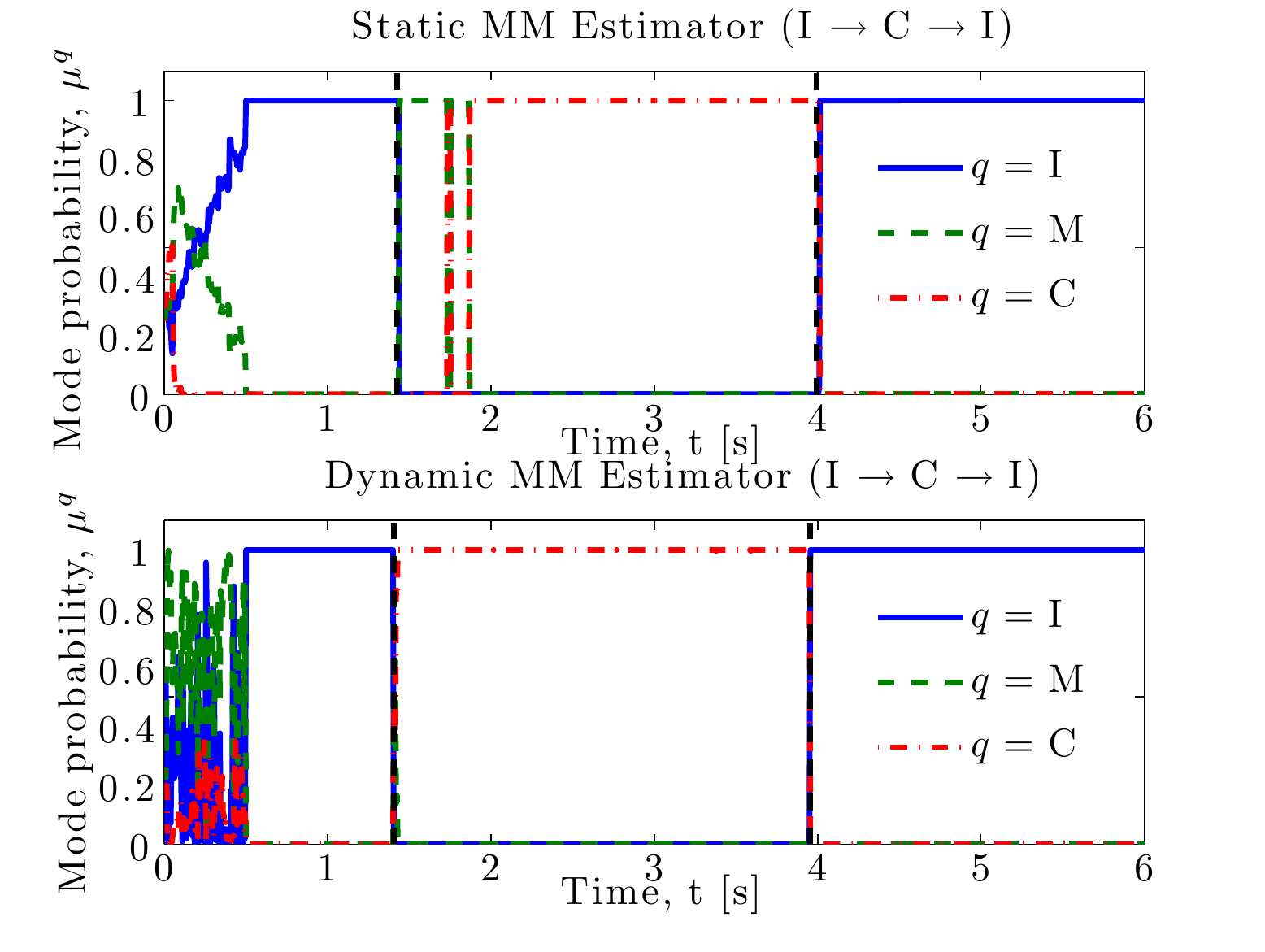}
\caption{Vehicle switches intentions 'I$\rightarrow$C$\rightarrow$I'.}
\end{subfigure}
\caption{Mode probabilities for each mode with static (top) and dynamic (bottom) MM estimators.\label{fig:modes} }
\end{center}
\end{figure*}
We return to the motivating example in Section \ref{sec:motivation} of two vehicles crossing an intersection.
Using the hidden mode system model with state $x=\begin{bmatrix} x_A, \dot{x}_A, x_B, \dot{x}_B \end{bmatrix}$, each intention corresponds to a mode $q \in \{ \textrm{I , M, C} \}$ with the following set of parameters and inputs:

\noindent $\bullet$ Inattentive Driver ($q=\textrm{I}$), with an unknown time-varying $d_1$ (uncorrelated with $x_B$ and $\dot{x}_B$, otherwise unrestricted):
\begin{align*}
\begin{array}{rl}
A^{I}_c&= \begin{bmatrix}  0 & 1 & 0 & 0 \\ 0 & -0.1 & 0 & 0 \\ 0 & 0 & 0 & 1 \\ 0 & 0 & 0 & -0.1 \end{bmatrix}, B_c^{I}=\begin{bmatrix}  0 \\ 0 \\ 0 \\ 1 \end{bmatrix}, G_c^I =\begin{bmatrix}  0 & 0 \\ 1 & 0 \\ 0 & 0 \\ 0 & 0 \end{bmatrix},\\
C^{I}_c&=\begin{bmatrix}  1 & 0 & 0 & 0 \\ 0 & 1 & 0 & -1 \\ 0 & 0 & 1 & 0 \\ 0 & 0 & 0 & 1  \end{bmatrix}, D_c^{I}=\begin{bmatrix}  0 \\ 0 \\ 0 \\ 0 \end{bmatrix}, H_c^I =\begin{bmatrix}  0 & 0 \\ 0 & 0 \\ 0 & 0.1\\ 0 & 1 \end{bmatrix}.\end{array}
\end{align*}
\noindent $\bullet$ Malicious Driver ($q=\textrm{M}$), i.e., with $d_1=K_p (x_B-x_A)+K_d (\dot{x}_B-\dot{x}_A)$ where $K_p=2$ and $K_d=4$:
\begin{align*}
\begin{array}{rl}
A^{M}_c&= \begin{bmatrix}  0 & 1 & 0 & 0 \\ -K_p & -0.1-K_d & K_p & K_d \\ 0 & 0 & 0 & 1 \\ 0 & 0 & 0 & -0.1 \end{bmatrix}, H_c^I =\begin{bmatrix}  0 & 0 \\ 0 & 0 \\ 0 & 0\\ 0 & -1 \end{bmatrix}, \\
B_c^{M}&=B_c^{I}, G_c^M =G_c^I , C^{M}_c=C^{I}_c, D_c^{M}=D_c^{I}.\end{array}
\end{align*}
\noindent $\bullet$ Cautious Driver ($q=\textrm{C}$), i.e., with $d_1=-K_p x_A -K_d \dot{x}_A$ where $K_p=2$ and $K_d=4$:
\begin{align*}
\begin{array}{rl}
A^{M}_c&= \begin{bmatrix}  0 & 1 & 0 & 0 \\ -K_p & -0.1-K_d & 0 & 0 \\ 0 & 0 & 0 & 1 \\ 0 & 0 & 0 & -0.1 \end{bmatrix}, H_c^I =\begin{bmatrix}  0 & 0 \\ 0 & -1 \\ 0 & 0 \\ 0 & 1 \end{bmatrix}, \\
B_c^{M}&=B_c^{I}, G_c^M =G_c^I , C^{M}_c=C^{I}_c, D_c^{M}=D_c^{I}.\end{array}
\end{align*}
Furthermore, the velocity measurement of the vehicle is corrupted by an unknown time-varying bias $d_2$. Thus, the switched linear system is described by
\begin{align*}
\dot{x}=A^q_c x + B^q_c u + G^q_c d +w^q, \quad y=C^{q}_c x + D^q_c u +H^q_c d +v^{q},
\end{align*}
where $d=[ d_1 \ d_2 ]^\top$, the intensities of the zero mean, white Gaussian noises, $w=[0 \ w_1 \ 0 \ w_2]^\top$ and $v$, are
\begin{align*}
Q_c = 10^{-4} \begin{bmatrix}  0 & 0 & 0 & 0  \\ 0 & 1.6 &  0 & 0\\ 0& 0 & 0 & 0 \\ 0 & 0 & 0 & 0.9 \end{bmatrix}; 
R_c = 10^{-4} \begin{bmatrix}  1 & 0 & 0 & 0  \\ 0 & 0.16 &  0 & 0\\ 0& 0 & 0.9 & 0 \\ 0 & 0 & 0 & 2.5 \end{bmatrix}.
\end{align*}
Since the proposed filter is for discrete-time systems, we employ a common conversion algorithm to convert the continuous dynamics to a discrete equivalent model with sample time $\triangle t= 0.01s$, assuming zero-order hold for the known and unknown inputs, $u$ and $d$. 

\begin{figure}[!tp]
\begin{center}
\begin{subfigure}[t]{0.85\textwidth}
\centering
\includegraphics[scale=0.64,trim=12mm  6mm 10mm 7.5mm,clip]{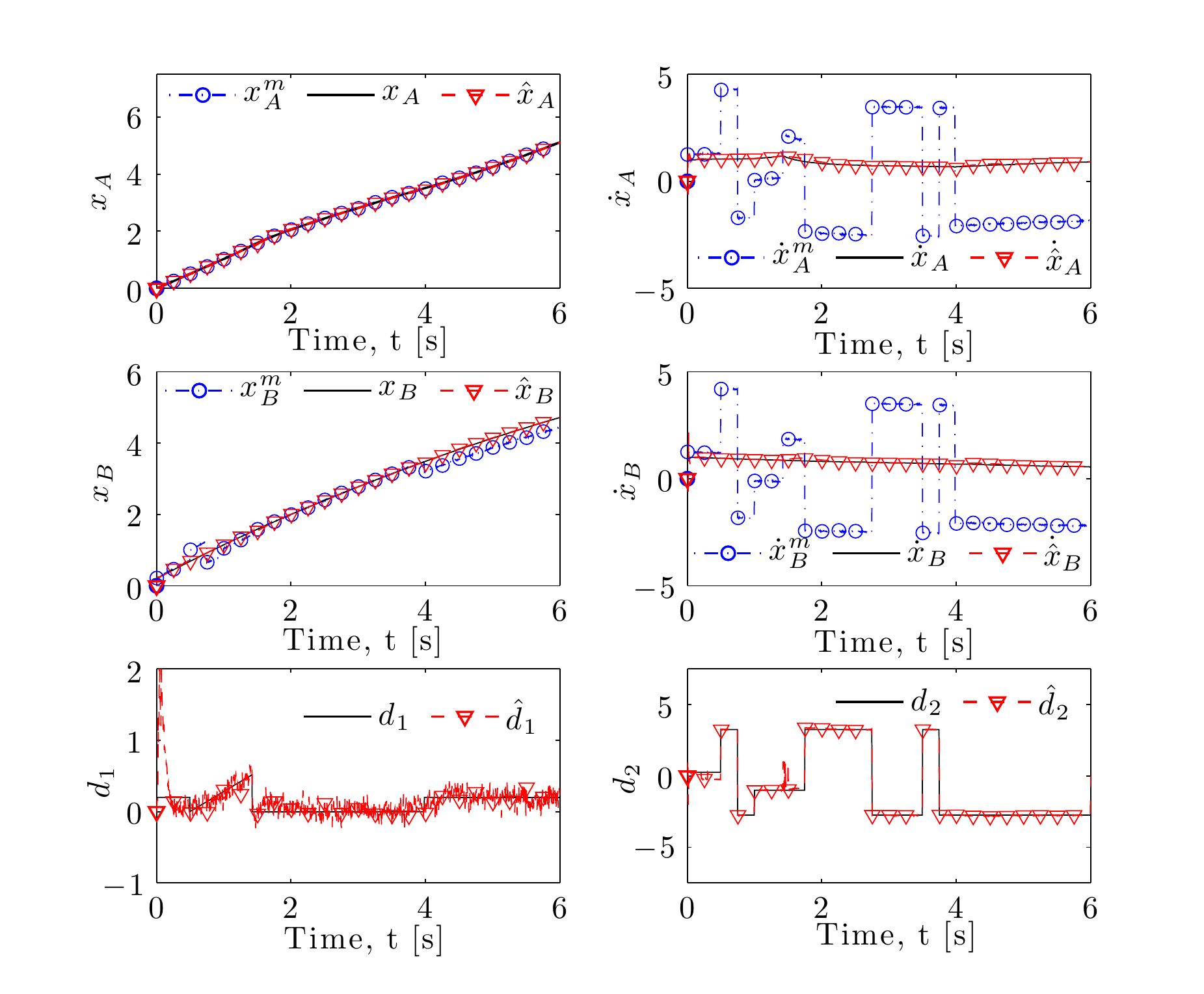} 
\caption{With the \emph{static} MM estimator.\label{fig:static}}
\end{subfigure}\\
\begin{subfigure}[t]{0.85\textwidth}
\centering
\includegraphics[scale=0.64,trim=12mm 5mm 10mm 8mm,clip]{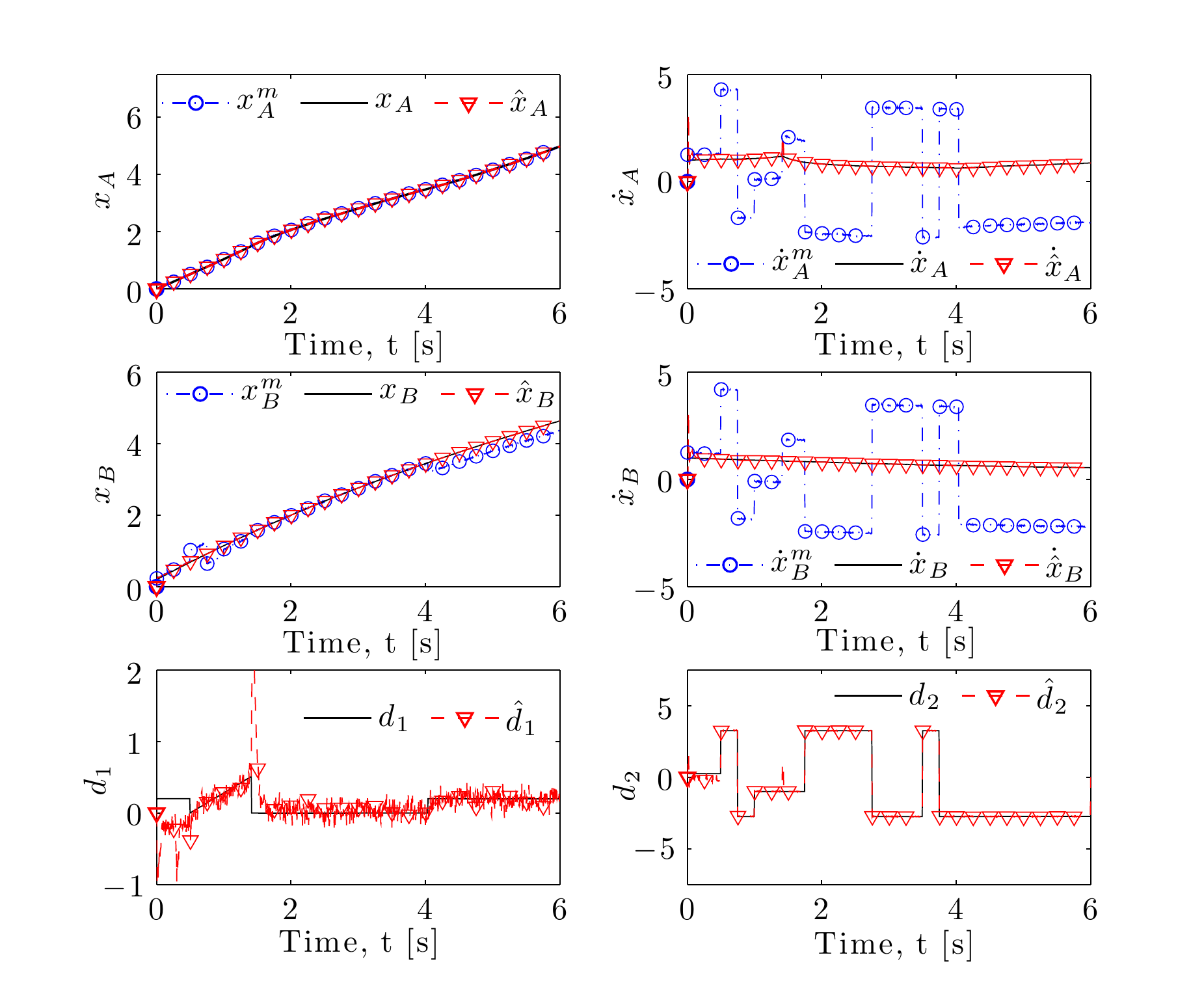} 
\caption{With the \emph{dynamic} MM estimator. \label{fig:dynamic}}
\end{subfigure}
\caption{Measured (superscript `m', unfiltered), actual and estimated states and unknown inputs  for  the 'I$\rightarrow$M$\rightarrow$I' case.}
\end{center}
\end{figure}

From Figure \ref{fig:modes}, we observe that both the static (i.e., the special case in Section \ref{sec:sc}) and dynamic MM estimators were successful at inferring the hidden modes of the system in the cases when the vehicle remains in the `Inattentive' mode, or switches modes according to I$\rightarrow$M$\rightarrow$I or I$\rightarrow$C$\rightarrow$I. The performance of the static MM estimator is slightly worse than the dynamic variant, as can be seen in Figure \ref{fig:modes}(c). On the other hand, the changes in the mode probability estimate of the dynamic MM estimator are quicker which could be interpreted as having a higher `sensitivity' to mode changes.

Taking a closer look at the `I$\rightarrow$M$\rightarrow$I' scenario (the others are omitted due to space  limitations) depicted in Figures \ref{fig:static} and \ref{fig:dynamic}, we observe that both variants of the MM estimators performed satisfactorily in the estimation of states and unknown inputs. Similar to the observation of the mode probabilities, we note that the estimates of the static MM estimator  (Figure \ref{fig:static}) are slightly inferior to that of the dynamic variant (Figure \ref{fig:dynamic}). As aforementioned, this is because the dynamic MM estimator allows for mode transitions through a Markovian jump process where the transition matrix can be used as a design tool or to incorporate prior knowledge about the mode switching process. In this example, the transition matrix is chosen as
$P_T=\begin{bmatrix}  0.7 & 0.15 & 0.15\\ 0.399 & 0.6& 0.001\\ 0.399& 0.001 & 0.6\end{bmatrix}$.

\section{Conclusion} \label{sec:conclusion}
This paper presented a multiple-model estimation algorithm for simultaneously estimating the mode, input and state of hidden mode switched linear stochastic systems with unknown inputs. 
We defined the notion of a generalized innovation sequence, which we then show to be a Gaussian white noise. Next, we exploited the whiteness property of the generalized innovation to form likelihood functions for determining mode probabilities. 
Finally, we investigated the asymptotic behavior, i.e., the mode distinguishability property, of the proposed algorithm.
Simulation results for vehicles at an intersection with switching driver intentions demonstrated the effectiveness of the proposed algorithm. 

%

\section*{Acknowledgments}
This work was supported by NSF grant CNS-1239182. M. Zhu is partially supported by ARO W911NF-13-1-0421 (MURI) and NSF grant CNS-1505664.

\bibliographystyle{unsrt}
\small
\bibliography{biblio}
\end{document}